\title{Optimal Matching Problem on the Boolean Cube}
\author{Shi Feng}
\address{Cornell University}
\email{\texttt{sf599@cornell.edu}}
\newtheorem{theorem}{Theorem}[section]
\newtheorem{lemma}[theorem]{Lemma}
\newtheorem{corollary}[theorem]{Corollary}
\DeclareMathOperator*{\argmin}{arg\,min}
\begin{document}
\maketitle

\begin{abstract}
We establish upper and lower bounds for the expected Wasserstein distance between the random empirical measure and the uniform measure on the Boolean cube. Our analysis leverages techniques from Fourier analysis, following the framework introduced in \cite{bobkov2021simple}, as well as methods from large deviations theory. 
\end{abstract}

\section{Introduction}
Consider a metric space $(X, d)$ equipped with a probability measure $\mu$. Let $X_1, \dots, X_N$ be i.i.d. random variables taking values in $X$, each distributed according to $\mu$, where $N \in \mathbb{N}$ is large. The (random) empirical measure $\mu_N$ associated with $X_1, \dots, X_N$ is defined as  

\begin{align}
    \mu_N(x) = \frac{1}{N} \sum_{i=1}^N \mathbb{I}(X_i = x),
\end{align}

where $\mathbb{I}$ denotes the indicator function.  

A natural question arises: how different is $\mu_N$ from $\mu$? One of the most widely used metrics to quantify the difference between two probability measures on a metric space is the Wasserstein-1 distance, denoted $\mathsf{W}_1(\mu_N, \mu)$. We will define the Wasserstein-1 distance later in this work. Since $\mu_N$ is itself a random measure, we are particularly interested in its expectation, $\mathbb{E}[\mathsf{W}_1(\mu_N,\mu)]$, and possibly its fluctuations.  

A significant amount of research has been devoted to understanding $\mathbb{E}[\mathsf{W}_1(\mu_N,\mu)]$ in the metric space $X = [0,1]^2$, where $\mu = \text{Unif}([0,1]^2)$ is the uniform distribution. The renowned AKT theorem, proved in \cite{ajtai1984optimal}, identifies the correct leading order asymptotics for $\mathbb{E}[\mathsf{W}_1(\mu_N,\mu)]$ in this setting, providing a key result in the study of Wasserstein distances for empirical measures. Building on these results, Talagrand \cite{talagrand2014upper} used the method of generic chaining to offer a proof that not only confirms the correct asymptotics but also establishes the best possible tail bounds for $\mathsf{W}_1(\mu_N,\mu)$, significantly advancing our understanding of the distribution of the Wasserstein distance in this context. In \cite{ambrosio2019pde}, a different approach was taken, where the limiting constant for a specific metric on $[0,1]^2$ was derived using techniques from partial differential equations (PDEs). This provided a deeper understanding of the geometric structure underlying the Wasserstein distance. More recently, \cite{bobkov2021simple} provided a Fourier analytic proof of the results, notable for its simplicity and clarity. This approach, by simplifying the problem, opens up the possibility that optimal matching problems in other groups or settings could be addressed in a similar way, potentially broadening the applicability of the results beyond the Euclidean case.

Apart from the space $[0,1]^d$, numerous studies have investigated measures on the space $\mathbb{R}^d$. For instance, \cite{fournier2015rate} examines the rate of convergence of $\mathsf{W}_p(\mu_N,\mu)$ for measures $\mu$ in $\mathbb{R}^d$. Furthermore, \cite{dobric1995asymptotics} and \cite{barthe2013combinatorial} establish the existence of a dimension-dependent limiting constant for the rate of convergence under appropriate scaling for arbitrary measures in $\mathbb{R}^d$. In addition, \cite{fournier2023convergence} provides explicit upper bounds, including constants, for $\mathbb{E}[\mathsf{W}_1(\mu_N,\mu)]$ in $\mathbb{R}^d$. Moreover, \cite{del2019central} proves the Central Limit Theorem for $W_p(\mu_N,\mu)$ in $\mathbb{R}^d$. More generally, for measures in arbitrary metric spaces, \cite{weed2019sharp} employs covering numbers to establish convergence rates for the matching problem in such settings.

With the growing popularity of AI and machine learning, there is increasing interest in high-dimensional problems. Consequently, studying high-dimensional matching problems is a worthwhile endeavor. However, only a few papers have explored the optimal matching problem in high-dimensional hypercubes. The first work to investigate high-dimensional matching and discuss the impact of dimension on asymptotics is \cite{talagrand1992matching}. Quantitative results in \cite{bobkov2021simple} also suggest certain bounds in high-dimensional settings; however, these bounds are generally not very precise. For instance, if we seek to determine the number of samples $N$ required to ensure that $\mathbb{E}[\mathsf{W}_1(\mu_N,\mu)] \leq 0.5$ in $[0,1]^n$, Theorem 3 in \cite{bobkov2021simple} states that we need $N \geq e^{\frac{1}{2} n \log(n) + \mathcal{O}(n)}$,while a matching lower bound remains unknown. Furthermore, if $N$ is of polynomial order in $n$, satisfying $\frac{\log(N)}{\log(n)} < \infty$, then very little can be rigorously established about the matching distance in this regime.  

This raises the question of whether there exists a high-dimensional setting where the matching distance can be evaluated with high accuracy for different sample sizes $N$ relative to the dimension $n$. The answer is affirmative: the Boolean cube, $\mathbb{B} = \{-1, 1\}^n$, equipped with the Hamming distance as its metric. Specifically, the Hamming distance between two points $x, y \in \mathbb{B}$ is defined as

\begin{align*}
    d(x, y) = \sum_{i=1}^{n} \mathbb{I}(x_i \neq y_i),
\end{align*}

where $x_i$ denotes the $i$-th coordinate of $x \in \mathbb{B}$. In this work, we consider the uniform probability measure $\mu$ on the Boolean cube, defined as

\begin{align*}
    \mu(A) = \frac{|A|}{2^n},
\end{align*}

for each $A \subseteq \mathbb{B}$. The empirical distribution $\mu_N$, defined in equation (1), is generated by i.i.d. random points $X_1, \dots, X_N \in \mathbb{B}$, and serves as the basis for studying Wasserstein distances between the empirical measure and the true uniform measure $\mu$.

The Boolean cube, $\mathbb{B} = \{-1,1\}^n$, is a fundamental structure in theoretical computer science, extensively studied in areas such as coding theory, computational complexity, and discrete geometry. Its combinatorial structure and well-defined metric properties make it a natural setting for analyzing various high-dimensional optimization problems. Moreover, its discrete nature allows for precise evaluation of the matching distances. This provides a valuable contrast to continuous spaces like $\mathbb{R}^n$, where exact computations are often intractable.

For simplicity of notation, $\mu$ and $\mu_N$ are always the uniform measure and the empirical measure on $\mathbb{B}$ in the following of this paper. And all $\log$ is base $e$. Here we present our main result of this paper.

\begin{theorem}
    When $N = n^{\alpha}$ for $\alpha\geq 2$, we have
    
    \begin{align}
    \label{2}
        \sqrt{\frac{\alpha-2}{4}} \leq \liminf_n \frac{\frac{n}{2}-\mathbb{E}[\mathsf{W}_1(\mu_N,\mu)]}{\sqrt{n\log(n)}} \leq \limsup_n \frac{\frac{n}{2}-\mathbb{E}[\mathsf{W}_1(\mu_N,\mu)]}{\sqrt{n\log(n)}} \leq \sqrt{\frac{\alpha+1}{2}}.
    \end{align}
    
    When $N = e^{\lambda n}$ for $0<\lambda<\log(2)$, define $H(x) = -x\log(x)-(1-x)\log(1-x)$ and take $r^* = H^{-1}(\log(2)-\lambda) \in (0,\frac{1}{2})$. We have
    
    \begin{align}
    \label{3}
        -\frac{1}{\log(\frac{1-r^*}{r^*})} \leq \liminf_n \frac{\mathbb{E}[\mathsf{W}_1(\mu_{N},\mu)] - r^*n}{\log(n)} \leq \limsup_n \frac{\mathbb{E}[\mathsf{W}_1(\mu_{N},\mu)] - r^*n}{\log(n)} \leq \frac{3.5}{\log(\frac{1-r^*}{r^*})}.
    \end{align}
    
    When $N = c2^n$, we have
    
    \begin{align}
    \label{4}
        e^{-c} \leq \liminf_n \mathbb{E}[\mathsf{W}_1(\mu_{N},\mu)] \leq \limsup_n \mathbb{E}[\mathsf{W}_1(\mu_{N},\mu)] \leq \frac{1}{\sqrt{2c}}.
    \end{align}
    
    When $N = c(n)2^n$ for some $c(n) \to \infty$, we have
    
    \begin{align}
    \label{5}
        \sqrt{\frac{1}{2\pi}} \leq \liminf_n \sqrt{c(n)}\mathbb{E}[\mathsf{W}_1(\mu_{N},\mu)] \leq \limsup_n \sqrt{c(n)}\mathbb{E}[\mathsf{W}_1(\mu_{N},\mu)] \leq \frac{\sqrt{2}}{2}.
    \end{align}
    
\end{theorem}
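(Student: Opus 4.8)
Each of the four displays splits into a lower and an upper bound on $\mathbb{E}[W(\mu_N,\mu)]$, and I would treat the four regimes by essentially independent arguments, everywhere through Kantorovich duality
\[
W(\mu_N,\mu)=\sup\Big\{\int f\,d\mu_N-\int f\,d\mu:\ f\ \text{is }1\text{-Lipschitz for }d\Big\},
\]
recalling that $f$ is $1$-Lipschitz for the Hamming metric iff $|f(x)-f(x^{\oplus i})|\le1$ for every $x\in\mathbb{B}$ and coordinate $i$. Two large-deviation facts about ball volumes $V(r):=|B(x,r)|=\sum_{k\le r}\binom nk$ would be used throughout: for $r\le n/2$ one has $V(r)\le e^{nH(r/n)}$, hence $V(r)/2^n\le e^{-n(\log2-H(r/n))}$, together with a matching lower bound up to a polynomial factor; and near the midpoint $\log2-H(\tfrac12-\varepsilon)=2\varepsilon^2+O(\varepsilon^4)$, so $V(r)/2^n=\exp\!\big(-(2+o(1))(n/2-r)^2/n\big)$ when $n/2-r=\Theta(\sqrt{n\log n})$.

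\textbf{Lower bounds on $\mathbb{E}[W(\mu_N,\mu)]$.} For the first two regimes I would use the admissible test function $f(y)=-\min_{i\le N}d(y,X_i)$, which is $1$-Lipschitz (a minimum of $1$-Lipschitz functions) and has $\int f\,d\mu_N=0$, so that $W(\mu_N,\mu)\ge\mathbb{E}_{Y\sim\mu}[\min_{i\le N}d(Y,X_i)]$; averaging over the sample and using that for fixed $Y$ the $d(Y,X_i)$ are i.i.d.\ $\mathrm{Binomial}(n,\tfrac12)$ gives $\mathbb{E}[W(\mu_N,\mu)]\ge\mathbb{E}[\min(D_1,\dots,D_N)]$ with $D_i$ i.i.d.\ $\mathrm{Binomial}(n,\tfrac12)$. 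Then $\mathbb{E}[\min_iD_i]\ge m\big(1-N\,\mathbb{P}(D_1\le m-1)\big)\ge m(1-1/n)$ for $m$ chosen maximal with $N\,\mathbb{P}(D_1\le m-1)\le1/n$; Hoeffding's bound $\mathbb{P}(D_1\le m-1)\le e^{-2(n/2-m+1)^2/n}$ (case $N=n^\alpha$) and $\mathbb{P}(D_1\le m-1)\le e^{-n(\log2-H((m-1)/n))}$ (case $N=e^{\lambda n}$) force $m=\tfrac n2-\sqrt{(\alpha+1)n\log n/2}-O(1)$, respectively $m=r^*n-\log n/\log\tfrac{1-r^*}{r^*}-O(1)$, yielding the right inequality of \eqref{2} and the left inequality of \eqref{3}. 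For the last two regimes only $W(\mu_N,\mu)\ge\|\mu_N-\mu\|_{\mathrm{TV}}$ is needed (any coupling of $\mu_N,\mu$ places mass $\ge\|\mu_N-\mu\|_{\mathrm{TV}}$ off the diagonal, where $d\ge1$). For $N=c2^n$, $\|\mu_N-\mu\|_{\mathrm{TV}}\ge\mu(\{\mu_N=0\})$ and $\mathbb{E}\,\mu(\{\mu_N=0\})=(1-2^{-n})^N\to e^{-c}$, which is the left inequality of \eqref{4}. For $N=c(n)2^n$, $\mathbb{E}\|\mu_N-\mu\|_{\mathrm{TV}}=\tfrac1{2c(n)}\mathbb{E}\big|\mathrm{Binomial}(N,2^{-n})-c(n)\big|$, and since $c(n)\to\infty$ the mean-absolute-deviation asymptotics for the binomial give this $=(1+o(1))\tfrac1{\sqrt{2\pi c(n)}}$, i.e.\ the left inequality of \eqref{5}.

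\textbf{Upper bounds for the last two regimes (Fourier).} Following \cite{bobkov2021simple}: for mean-zero $1$-Lipschitz $f=\sum_{S\ne\emptyset}\widehat f(S)\chi_S$, the Lipschitz condition gives $\sum_i\mathbb{E}_x[(f(x)-f(x^{\oplus i}))^2]\le n$, i.e.\ $\sum_S|S|\widehat f(S)^2\le n/4$, so by Cauchy--Schwarz $\int f\,d(\mu_N-\mu)=\sum_{S\ne\emptyset}\widehat f(S)\widehat{\mu_N}(S)\le\tfrac{\sqrt n}2\big(\sum_{S\ne\emptyset}\widehat{\mu_N}(S)^2/|S|\big)^{1/2}$, with $\widehat{\mu_N}(S)=\tfrac1N\sum_k\chi_S(X_k)$. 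Hence $W(\mu_N,\mu)\le\tfrac{\sqrt n}2\big(\sum_{S\ne\emptyset}\widehat{\mu_N}(S)^2/|S|\big)^{1/2}$, and by Jensen together with $\mathbb{E}[\widehat{\mu_N}(S)^2]=1/N$ for $S\ne\emptyset$,
\[
\mathbb{E}[W(\mu_N,\mu)]\le\frac{\sqrt n}2\Big(\frac1N\sum_{S\ne\emptyset}\frac1{|S|}\Big)^{1/2}=\frac{\sqrt n}2\Big(\frac1N\sum_{k=1}^n\frac{\binom nk}{k}\Big)^{1/2}.
\]
A saddle-point estimate gives $\sum_{k=1}^n\binom nk/k=(1+o(1))\,2^{n+1}/n$, whence $\mathbb{E}[W(\mu_N,\mu)]\le(1+o(1))\tfrac1{\sqrt2}\sqrt{2^n/N}$, which is $(1+o(1))/\sqrt{2c}$ for $N=c2^n$ and $(1+o(1))\tfrac{\sqrt2}{2}/\sqrt{c(n)}$ for $N=c(n)2^n$: the right inequalities of \eqref{4} and \eqref{5}.

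\textbf{Upper bounds for the first two regimes (transport), and the main obstacle.} When $N\ll2^n$ the Fourier bound is exponentially large and useless, so here I would build an explicit transport: pick a radius $r$ and a map $T:\mathbb{B}\to\{X_1,\dots,X_N\}$ with $d(y,T(y))\le r$ for all $y$ and $|T^{-1}(X_i)|=2^n\mu_N(X_i)$ for all $i$, so that $T_*\mu=\mu_N$ and $W(\mu_N,\mu)\le\int d(y,T(y))\,d\mu(y)\le r$. Existence of such a balanced nearest-centre assignment is a bipartite transportation feasibility question and, by the Hall/Gale--Ryser criterion, holds iff $\big|\bigcup_{i\in S}B(X_i,r)\big|\ge\sum_{i\in S}2^n\mu_N(X_i)$ for every $S\subseteq[N]$; since with high probability the maximal sample multiplicity is $O(1)$ (it is $1$ for $N=n^\alpha$ and bounded for $N=e^{\lambda n}$, $\lambda<\log2$), it suffices that $\big|\bigcup_{i\in S}B(X_i,r)\big|\ge O(|S|)\,2^n/N$ for all $S$. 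I would verify this with high probability for $r$ chosen so that $V(r)\ge p(n)\,2^n/N$ for a suitable polynomial $p(n)$ (covering all of $\mathbb{B}$ already needs $V(r)\gtrsim n\,2^n/N$ by a union bound over the $2^n$ points; intermediate $|S|$ forces the extra polynomial slack). Feeding in the volume estimates, $r$ can be taken as small as $\tfrac n2-\sqrt{(\alpha-2)n\log n/4}$ for $N=n^\alpha$ and as small as $r^*n+3.5\log n/\log\tfrac{1-r^*}{r^*}$ for $N=e^{\lambda n}$ (the numerical constants $\tfrac14$ and $3.5$ reflect the particular volume and union bounds used and are not claimed optimal), giving the left inequality of \eqref{2} and the right inequality of \eqref{3}. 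The step I expect to be the main obstacle is exactly this Hall-type verification: $\big|\bigcup_{i\in S}B(X_i,r)\big|$ has per-sample Lipschitz constant $V(r)$, far too large for McDiarmid's inequality to yield the deviation bound $e^{-\Omega(|S|\log(N/|S|))}$ needed to union-bound over the $\binom{N}{|S|}$ sets $S$. One therefore needs either a sharper concentration for unions of random Hamming balls (e.g.\ a Freedman-type bound exploiting the small conditional variance in the sparse regime), or a two-stage construction — a uniformly random assignment to nearby centres, followed by a small mass correction whose total-variation cost, multiplied by the diameter $n$, stays $o(\sqrt{n\log n})$ (resp.\ $o(\log n)$).
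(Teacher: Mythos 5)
Your lower bounds on $\mathbb{E}[W(\mu_N,\mu)]$ in all four regimes, and your upper bounds in the regimes $N=c2^n$ and $N=c(n)2^n$, are correct and essentially identical to the paper's arguments (the test function $f(y)=-\min_i d(y,X_i)$ is a slightly cleaner dual formulation of the paper's ``no sample within radius $r$'' ball argument, but it is the same binomial-tail computation and yields the same constants $\sqrt{(\alpha+1)/2}$ and $-1/\log\frac{1-r^*}{r^*}$; the Fourier/Cauchy--Schwarz/Jensen chain for large $N$ is precisely the paper's Lemma 2.1 with $\epsilon=0$).

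The genuine gap is exactly where you flagged it: the upper bounds for $N=n^\alpha$ (left inequality of \eqref{2}) and $N=e^{\lambda n}$ (right inequality of \eqref{3}). You try to build an exact measure-preserving transport map $T$ to the sample set via a Gale--Ryser/Hall feasibility argument, and correctly observe that the required lower bound on $\bigl|\bigcup_{i\in S}B(X_i,r)\bigr|$ uniformly over all $S\subseteq[N]$ is not accessible by standard bounded-difference concentration (the per-sample Lipschitz constant $|B(X_1,r)|$ is exponentially large). Your two suggested escape routes are speculative and not carried out. The paper sidesteps Hall's theorem entirely by \emph{smoothing} $\mu_N$ rather than matching it exactly. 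For $N=n^\alpha$ it replaces $\mu_N$ by the $\epsilon$-noised measure $\mu_N^\epsilon$ (independent coordinate flips), pays $W(\mu_N^\epsilon,\mu_N)\le\epsilon n$ by coupling, and then bounds $\mathbb{E}[W(\mu_N^\epsilon,\mu)]$ by the \emph{same} Fourier/Cauchy--Schwarz chain you used for large $N$, but now the factor $(1-2\epsilon)^{2|S|}$ tames the exponential sum; choosing $\epsilon$ so that the Fourier bound is $\le1$ gives $\mathbb{E}[W]\le\epsilon n+1$ and, after optimizing $\epsilon$ and using $e^x\ge1+x$, exactly the constant $\sqrt{(\alpha-2)/4}$ (Lemma 4.1, Corollary 4.2). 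For $N=e^{\lambda n}$ the paper replaces $\mu_N$ by $P_r\mu_N$ (each sample spread uniformly over its Hamming ball of radius $r$), pays $W(P_r\mu_N,\mu_N)\le r$, and then uses the crude bound $W(P_r\mu_N,\mu)\le n\,\mathrm{TV}(P_r\mu_N,\mu)$ together with a one-sided Chernoff bound on $\mathbb{P}(P_r\mu_N(y)\ge(1+\tfrac1n)2^{-n})$ (Lemma 5.1), with $r=r^*n+\frac{3.5+\epsilon}{H'(r^*)}\log n$. Both devices deliver \emph{any} admissible transport implicitly, so no Hall-type feasibility is ever needed; that is the missing idea in your proposal.
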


Note that (2) refers to the case of small sample sizes $N$, where $N$ is a polynomial in $n$, the dimension of the space. In (3), we consider the case of intermediate sample sizes, where $N$ is exponentially large but still much smaller than the size of the Boolean cube, $|\mathbb{B}| = 2^n$. In (4), we address large sample sizes, where $N$ is a constant scaling of $|\mathbb{B}|$. Finally, (5) pertains to the scenario of very large sample sizes, where $N$ is much larger than $|\mathbb{B}|$.

Before delving into the proof, we first examine why these bounds are more precise than the existing bounds for the matching problem on $[0,1]^n$. Consider the same question: how many samples $N$ are required to ensure that $\mathbb{E}[\mathsf{W}_1(\mu_{N},\mu)] \leq 0.5$? Equation (4) establishes that  $N = e^{\log(2)n+\Omega(1)}.$  Furthermore, when $2 < \frac{\log(N)}{\log(n)} < \infty$, equation (2) yields  $\mathbb{E}[\mathsf{W}_1(\mu_{N},\mu)] = \frac{n}{2} - \Omega(\sqrt{n \log(n)})$. And when $0<\frac{log(N)}{log(|\mathbb{B}|)}<1$, (3) states that $\mathbb{E}[\mathsf{W}_1(\mu_{N},\mu)] = r^*n+\Omega(log(n))$, where $r^*$ is a function of $\frac{log(N)}{log(|\mathbb{B}|)}$. In conjunction with the concentration bounds presented in Section 6.3, we can provide a precise characterization of $\mathsf{W}_1(\mu_{N},\mu)$ for various values of $N$ relative to $n$. 

Another interesting question concerns the rate of decay of $\mathbb{E}[\mathsf{W}_1(\mu_{N},\mu)]$ as the sample size $N$ tends to infinity for a fixed dimension $n$. Consider the setup where $[0,1]^n$ with $n \geq 3$ is equipped with the uniform measure $m$. According to \cite{dobric1995asymptotics}, there exists a function $f: \mathbb{Z} \to \mathbb{R}_+$ such that  
\begin{align*}  
    \lim_{N\to \infty} \frac{\mathbb{E}[\mathsf{W}_1(m_{N},m)]}{N^{-\frac{1}{n}}} = f(n).  
\end{align*}  
Furthermore, \cite{bobkov2021simple} establishes that $f(n) \leq \mathcal{O}(\sqrt{n})$. However, when considering the Boolean cube, the behavior differs significantly due to its discrete structure. By equation (5), we obtain  
\begin{align*}  
    \limsup_{N\to \infty} \frac{\mathbb{E}[\mathsf{W}_1(\mu_{N},\mu)]}{N^{-\frac{1}{2}}} = \Omega(2^{\frac{n}{2}}).  
\end{align*}  
The $\limsup$ above can be replaced by $\liminf$.

For readers who are interested in results for small $n$, these can be found in Lemma 2.1, Lemma 4.1, Corollary 4.2, and Lemma 5.1. The structure of the paper is as follows. In Section 2, we present the necessary background on the Wasserstein distance, the Fourier transform on the Boolean cube, and the total variation distance. We also explore the connections between these concepts. In Section 3, we prove results (4) and (5) using the Fourier transform. Section 4 is dedicated to proving result (2). In Section 5, we prove result (3) using large deviation methods. In Section 6, we provide a few related theorems with short proofs. Finally, in Section 7, we conclude the paper with a summary of our findings and provide additional remarks.

\section{Preliminary}
\subsection{Wasserstein distance}
Consider a metric space $(X, d)$ and two probability measures $\nu_1$ and $\nu_2$ defined on it. The Wasserstein distance between the two measures is defined as

\begin{align}
    \mathsf{W}_1(\nu_1, \nu_2) = \inf_{\gamma \in \Gamma} \int_{X \times X} d(x_1, x_2) \, d\gamma(x_1, x_2),
\end{align}

where $\Gamma$ represents the set of all couplings between $\nu_1$ and $\nu_2$. This definition can also be rewritten, as established by the famous duality theorem, in an equivalent form:

\begin{align}
    \mathsf{W}_1(\nu_1, \nu_2) = \sup_{f: 1-\text{Lipschitz}} \left( \int_X f(x) \, d\nu_1(x) - \int_X f(x) \, d\nu_2(x) \right),
\end{align}

where the supremum is taken over all 1-Lipschitz functions $f$. We will use the definition in equation (7) to prove results (2), (4), and (5), while definition (6) will be applied to prove result (3). Note that the Wasserstein distance is a metric, meaning that it satisfies the triangle inequality:

\begin{align}
    \mathsf{W}_1(\nu_1, \nu_3) \leq \mathsf{W}_1(\nu_1, \nu_2) + \mathsf{W}_1(\nu_2, \nu_3).
\end{align}

For the duality theorem and the metric property for Wasserstein distance, we refer to Villani's book \cite{villani2009optimal}.

\subsection{Fourier transform}
The following method of Fourier transform for functions on the Boolean cube is well-known, so we will omit proofs for the standard results. For more details, see \cite{o2014analysis}.

For a set $S \subseteq [n]$, define the $S$-th Fourier basis function as

\begin{align*}
    \chi_S = \prod_{i \in S} x_i.
\end{align*}

Then $\{\chi_S \;|\; S \subseteq [n]\}$ forms an orthonormal basis in the function space $\{f: \mathbb{B} \to \mathbb{R}\}$ with respect to the inner product

\begin{align*}
    \langle f, g \rangle = \frac{1}{2^n}\sum_{x \in \mathbb{B}} f(x)g(x).
\end{align*}

Therefore, for any function $f$, we have the Fourier expansion $f = \sum_{S \subseteq [n]} \hat{f}(S) \chi_S$, where $\hat{f}(S) = \langle f, \chi_S \rangle$. The inner product between two functions is also given by

\begin{align*}
    \langle f, g \rangle = \sum_{S \subseteq [n]} \hat{f}(S) \hat{g}(S).
\end{align*}

For a measure $\nu$ on $\mathbb{B}$, define the expected value of $f$ under $\nu$ as

\begin{align*}
    E_{\nu}[f] = 2^n \langle f, \nu \rangle = \sum_{x \in \mathbb{B}} f(x) \nu(x),
\end{align*}

and the expected value with respect to the uniform measure $\mu$ as

\begin{align*}
    E[f] = E_{\mu}[f] = \frac{1}{2^n} \sum_{x \in \mathbb{B}} f(x).
\end{align*}

We intentionally use $E$ instead of $\mathbb{E}$ to denote expectation taken over $\mathbb{B}$, reserving $\mathbb{E}$ for expectation on the probability space that generates the random measure $\mu_N$.
Note that $E[f] = \hat{f}(\emptyset)$, where $\emptyset$ denotes the empty set.

For a function $f$, which can also be interpreted as a measure, we define the $\epsilon$-diffused function as

\begin{align}
    f^\epsilon(x) = \sum_{x' \in \mathbb{B}} (1-\epsilon)^{n - d(x, x')} \epsilon^{d(x, x')} f(x'),
\end{align}

where $d(x, x')$ is the Hamming distance between $x$ and $x'$. The measure $\nu^\epsilon$ can be interpreted as the distribution of a random point $y \in \mathbb{B}$, where $y$ is generated by first picking a point $x$ according to the measure $\nu$, and then flipping each of the $n$ coordinates of $x$ independently with probability $\epsilon$. This provides a probabilistic perspective on how the $\epsilon$-diffusion operator spreads the measure $\nu$ across the Boolean cube, blending nearby points based on their proximity in terms of the Hamming distance.

The functions $\chi_S$ are the orthonormal eigenvectors for the operator of $\epsilon$-diffusion, with the spectrum given by

\begin{align}
    f^\epsilon = \sum_{S \subseteq [n]} (1 - 2\epsilon)^{|S|} \hat{f}(S) \chi_S.
\end{align}

This result is non-trivial and provides a clear connection between the Fourier coefficients of $f$ and the behavior of the $\epsilon$-diffusion operator. For a detailed proof, refer to \cite{o2014analysis}. Since the $\epsilon$-diffusion operator acts linearly and its eigenvectors form an orthonormal basis, it follows that the operator is self-adjoint. The expected value of $f^\epsilon$ with respect to the measure $\nu$ is

\begin{align}
    E_{\nu^\epsilon}[f] = \sum_{S \subseteq [n]} (1 - 2\epsilon)^{|S|} \hat{f}(S) \hat{\nu}(S) = E_{\nu}[f^\epsilon].
\end{align}

If we interpret $\nu^\epsilon$ as the distribution of $y$, generated by sampling $x \sim \nu$ and then flipping each coordinate of $x$ independently with probability $\epsilon$, we can bound the Wasserstein distance between $\nu^\epsilon$ and $\nu$ as:

\begin{align}
    \mathsf{W}_1(\nu^\epsilon, \nu) \leq \mathbb{E}[d(x, y)] = \sum_{i=1}^{n}\mathbb{I}(x_i=y_i) = \epsilon n,
\end{align}

where $d(x, y)$ is the Hamming distance between $x$ and $y$. The first inequality is by coupling $y$ with its generator $x$ and use the first definition (6) of the Wasserstein distance.

Next, we define the $i$-th influence of a function $f: \mathbb{B} \to \mathbb{R}$ as:

\begin{align*}
    \mathsf{Inf}_i(f) = \mathbb{E}\left[\frac{(f(x^{i \to 1}) - f(x^{i \to -1}))^2}{4}\right],
\end{align*}

where $x^{i \to 1}$ denotes the vector $x$ with its $i$-th coordinate set to $1$, and $x^{i \to -1}$ denotes the vector $x$ with its $i$-th coordinate set to $-1$.

If $f$ is 1-Lipschitz, then we have the bound $Inf_i(f) \leq \frac{1}{4}$. The total influence of $f$ is defined as:

\begin{align*}
    \mathsf{Inf}(f) = \sum_{i=1}^n \mathsf{Inf}_i(f).
\end{align*}

Moreover, it holds that:

\begin{align*}
    Inf(f) = \sum_{S \subseteq [n]} |S| \hat{f}(S)^2,
\end{align*}

where $|S|$ represents the size of the subset $S$. We refer to \cite{o2014analysis} for readers who are unfamiliar with the above formula. Therefore, for 1-Lipschitz functions, the Fourier coefficients satisfy:

\begin{align}
    \sum_{S \subseteq [n]} |S| \hat{f}(S)^2 \leq \frac{n}{4}.
\end{align}

\subsection{Connection between 2.1 and 2.2}
Now let's see how the Fourier transform can help us solve the matching distance problem, which seems unrelated at first glance.\\

We first diffuse the random measure $\mu_N$ to the scale we want. By (8) and (12), we have

\begin{align}
    \mathsf{W}_1(\mu_N,\mu) \leq \mathsf{W}_1(\mu_N^\epsilon, \mu) + \mathsf{W}_1(\mu_N^\epsilon, \mu_N) \leq \mathsf{W}_1(\mu_N^\epsilon, \mu) + \epsilon n.
\end{align}

Then we use the second definition (7) of the Wasserstein distance. Note that the integral becomes a sum, since our space $X = \mathbb{B}$ is discrete.

\begin{align*}
    \mathsf{W}_1(\mu_N^\epsilon, \mu) = \sup_{f: 1\text{-Lipschitz}} \sum_{x \in \mathbb{B}} f(x)(\mu_N^\epsilon(x) - \mu(x)) = \sup_{f: 1\text{-Lipschitz}, E[f] = 0} \sum_{x \in \mathbb{B}} f(x)\mu_N^\epsilon(x).
\end{align*}

We are able to restrict our attention to $f$ such that $E[f] = 0$ since constant shifting of $f$ does not affect its Lipschitz condition. In this way, we can remove the $\sum_{x \in \mathbb{B}} f(x)\mu(x)$ term above. After this, we write the sum, or the scaled inner product of $f$ and $\mu_N^\epsilon$, by their Fourier coefficients. By (11),

\begin{align*}
    \sum_{x \in \mathbb{B}} f(x)\mu_N^\epsilon(x) = \sum_{\emptyset \neq S \subseteq [n]} (1 - 2\epsilon)^{|S|} \hat{f}(S)\hat{\mu}_N(S).
\end{align*}

Note that $\hat{f}(S)$ is non-random and fixed by $f$, while $\hat{\mu}_N(S)$ is random and depends on the points $X_1, \ldots, X_N$. Therefore, we aim to separate these two terms and bound them individually. A natural way to achieve this separation is by applying the Cauchy–Schwarz inequality. This gives:

\begin{align*}
    \sum_{\emptyset \neq S \subseteq [n]} (1 - 2\epsilon)^{|S|}\hat{f}(S)\hat{\mu}_N(S) \leq \left(\sum_{\emptyset \neq S \subseteq [n]} |S|\hat{f}(S)^2\right)^{\frac{1}{2}} \left(\sum_{\emptyset \neq S \subseteq [n]} \frac{1}{|S|}(1 - 2\epsilon)^{2|S|}\hat{\mu}_N(S)^2\right)^{\frac{1}{2}}.
\end{align*}

Now, returning to the expected Wasserstein distance, we have:

\begin{align*}
    \mathbb{E}[\mathsf{W}_1(\mu_N^\epsilon, \mu)] \leq \mathbb{E}\left[\sup_{f: 1\text{-Lipschitz}, E[f] = 0} \left(\sum_{\emptyset \neq S \subseteq [n]} |S|\hat{f}(S)^2\right)^{\frac{1}{2}} \left(\sum_{\emptyset \neq S \subseteq [n]} \frac{1}{|S|}(1 - 2\epsilon)^{2|S|}\hat{\mu}_N(S)^2\right)^{\frac{1}{2}}\right].
\end{align*}

By (13) and Jensen's inequality, we can further bound it by

\begin{align*}
    \mathbb{E}[\mathsf{W}_1(\mu_N^\epsilon,\mu)] 
    &\leq \sqrt{\frac{n}{4}}\mathbb{E}\left[\sum_{\emptyset \neq S\in[n]} \frac{1}{|S|}(1-2\epsilon)^{2|S|}\hat{\mu_N}(S)^2\right]^\frac{1}{2} \\
    &= \frac{\sqrt{n}}{2}\left(\sum_{\emptyset \neq S\in[n]} \frac{1}{|S|}(1-2\epsilon)^{2|S|}\mathbb{E}[\hat{\mu_N}(S)^2]\right)^{\frac{1}{2}}.
\end{align*}

If we think more about what is $\hat{\mu_N}(S)$, we realize that

\begin{align*}
    \mathbb{E}[\hat{\mu_N}(S)^2] = \mathbb{E}\left[\left(\frac{1}{N}\sum_{i=1}^{N}\chi_S(X_i)\right)^2\right] = \frac{1}{N^2}\sum_{i=1}^{N}\mathbb{E}[\chi_S(X_i)^2] = \frac{1}{N}.
\end{align*}

The second equality holds due to the i.i.d. property of $X_i$ and the fact that $(\mathbb{E}[\chi_S(X_i)] = 0$ for $ \neq \emptyset$, which eliminates the cross terms. The last equality is by observing $\chi_S(x)^2 = 1$ for all $x\in \mathbb{B}$. Therefore, we obtain the following bound:

\begin{align*}
    \mathbb{E}[\mathsf{W}_1(\mu_N^\epsilon, \mu)] &\leq \frac{\sqrt{n}}{2} \left( \sum_{\emptyset \neq S \in [n]} \frac{1}{|S|}(1-2\epsilon)^{2|S|} \frac{1}{N} \right)^{\frac{1}{2}} \\
    &= \frac{\sqrt{n}}{2\sqrt{N}} \left( \sum_{i=1}^{n} \frac{1}{i} \binom{n}{i} (1-2\epsilon)^{2i} \right)^{\frac{1}{2}}.
\end{align*}

Combining this with equation (14), we obtain the following lemma:

\begin{lemma}
For any \( 0 \leq \epsilon \leq \frac{1}{2} \), we have

\begin{align}
    \mathbb{E}[\mathsf{W}_1(\mu_N,\mu)] &\leq \frac{\sqrt{n}}{2\sqrt{N}} \left( \sum_{i=1}^{n} \frac{1}{i} \binom{n}{i} (1-2\epsilon)^{2i} \right)^{\frac{1}{2}} + \epsilon n.
\end{align}

\end{lemma}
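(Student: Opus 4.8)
The plan is to assemble the argument laid out in Section 2.3 into a clean proof, combining the diffusion trick, Kantorovich--Rubinstein duality, and a weighted Cauchy--Schwarz split on the Fourier side. First I would fix $\epsilon \in [0,\tfrac12]$ and apply the triangle inequality (8) together with the transport estimate $W(\mu_N^\epsilon, \mu_N) \leq \epsilon n$ from (12) (obtained by coupling the diffused point with its generator), which reduces the task to bounding $\mathbb{E}[W(\mu_N^\epsilon, \mu)]$.

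Next, I would invoke the dual formulation (7). Since $\mu_N^\epsilon$ and $\mu$ are both probability measures and shifting $f$ by a constant affects neither its Lipschitz constant nor the value of $\int f\, d\mu_N^\epsilon - \int f\, d\mu$, the supremum may be restricted to $1$-Lipschitz $f$ with $E[f] = \hat f(\emptyset) = 0$; this kills the $\mu$-term. Expanding $f$ and $\mu_N^\epsilon$ in the Fourier basis and using the eigenvalue identity (11) for the diffusion operator, the quantity to control becomes $\sum_{\emptyset \neq S \subseteq [n]} (1-2\epsilon)^{|S|}\hat f(S)\hat\mu_N(S)$, in which $\hat f(S)$ is deterministic and $\hat\mu_N(S)$ is random.

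The key step is a Cauchy--Schwarz inequality with the weights $|S|^{1/2}$ and $|S|^{-1/2}$, which separates this into $\bigl(\sum_{\emptyset\neq S}|S|\hat f(S)^2\bigr)^{1/2}$ times $\bigl(\sum_{\emptyset\neq S}|S|^{-1}(1-2\epsilon)^{2|S|}\hat\mu_N(S)^2\bigr)^{1/2}$. The first factor is (a piece of) the total influence of $f$, hence at most $n/4$ by (13) \emph{uniformly over all $1$-Lipschitz $f$}; so the supremum over $f$ collapses to the constant $\sqrt{n}/2$ and only the $f$-free random factor survives. I would then bring the expectation inside the square root by Jensen's inequality and compute $\mathbb{E}[\hat\mu_N(S)^2] = 1/N$ for every nonempty $S$, using independence, $\mathbb{E}[\chi_S(X_i)] = 0$, and $\chi_S^2 \equiv 1$. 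Grouping the terms by $|S| = i$ produces the factor $\binom{n}{i}$, and adding back the $\epsilon n$ contribution from the first step gives exactly (15).

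The only point needing a little care is the interchange of $\sup_f$ and $\mathbb{E}$: it is legitimate here precisely because, after Cauchy--Schwarz and (13), the $f$-dependent factor is bounded by a deterministic constant, so one is really estimating $\mathbb{E}\bigl[\sup_f(\cdots)\bigr] \leq \tfrac{\sqrt n}{2}\,\mathbb{E}\bigl[(\text{random factor})^{1/2}\bigr]$ and then applying Jensen only to the genuinely random quantity --- no minimax or measurability subtlety arises. Everything else is routine binomial bookkeeping, so I do not anticipate a substantive obstacle.
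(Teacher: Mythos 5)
Your proposal reproduces the paper's own argument step for step: $\epsilon$-diffusion plus the transport bound $W(\mu_N^\epsilon,\mu_N)\le\epsilon n$, Kantorovich--Rubinstein duality with the mean-zero normalization, the weighted Cauchy--Schwarz split, the total-influence bound (13), Jensen, and $\mathbb{E}[\hat\mu_N(S)^2]=1/N$. It is correct and essentially identical to the proof in the paper, with your remark on the $\sup/\mathbb{E}$ interchange being a sound and slightly more explicit justification than what the paper writes.
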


\subsection{Total variation distance}
For two probability measures $ \nu_1, \nu_2 $ on the Boolean cube $ \mathbb{B} $, their total variation distance is defined as

\begin{align*}
    \mathsf{TV}(\nu_1, \nu_2) &= \sum_{x \in \mathbb{B} : \nu_1(x) > \nu_2(x)} \nu_1(x) - \nu_2(x) \\
    &= \sum_{x \in \mathbb{B}} (\nu_1(x) - \nu_2(x))^+,
\end{align*}

where $ (\cdot)^+ $ denotes the positive part of a number (i.e., $ (\nu_1(x) - \nu_2(x))^+ = \max(\nu_1(x) - \nu_2(x), 0) $).

The key observation is the following relation between the total variation distance and the Wasserstein distance:

\begin{align}
    \mathsf{TV}(\nu_1, \nu_2) &\leq \mathsf{W}_1(\nu_1, \nu_2) \leq n \cdot \mathsf{TV}(\nu_1, \nu_2),
\end{align}

which states that the total variation distance is a lower bound and $ n $ times the total variation distance is an upper bound for the Wasserstein distance.

This result can be explained as follows: The total variation distance represents how much mass needs to be shifted from $ \nu_1 $ to $ \nu_2 $. The Wasserstein distance $ W $ takes into account the "cost" of moving mass. The cost of moving mass is at least 1 and at most $ n $ (diameter of $\mathbb{B}$). Therefore, the Wasserstein distance will always lie between the total variation distance and $ n $ times the total variation distance. For detailed proof, refer to \cite{villani2009optimal}.

\section{For (very) large size $N$}
Now we focus on the case where $N = c(n)2^n$ for some function $c(n)$ depending on $n$.

\begin{lemma}
    Suppose $N = c(n)2^n$, then
    
    \begin{align}
        \limsup_n \sqrt{c(n)}\mathbb{E}[\mathsf{W}_1(\mu_N,\mu)] \leq \frac{\sqrt{2}}{2}.
    \end{align}
    
\end{lemma}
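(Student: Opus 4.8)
The plan is to apply Lemma~2.1 with the degenerate choice $\epsilon = 0$ (allowed by its statement). In (15) this makes the additive term $\epsilon n$ vanish and replaces every factor $(1-2\epsilon)^{2i}$ by $1$, leaving
\[
\mathbb{E}[W(\mu_N,\mu)] \le \frac{\sqrt n}{2\sqrt N}\left(\sum_{i=1}^n \frac1i\binom ni\right)^{1/2}.
\]
Writing $N = c(n)2^n$, the quantity in the lemma becomes $\sqrt{c(n)}\,\mathbb{E}[W(\mu_N,\mu)] \le \frac{\sqrt n}{2\cdot 2^{n/2}}\,S_n^{1/2}$, where $S_n := \sum_{i=1}^n \frac1i\binom ni$, so the desired conclusion $\limsup_n \sqrt{c(n)}\,\mathbb{E}[W(\mu_N,\mu)] \le \frac{\sqrt2}{2}$ is \emph{equivalent} to the purely combinatorial estimate $S_n \le (1+o(1))\,\frac{2^{n+1}}{n}$. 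Note that $c(n)$ cancels completely, so no hypothesis on its growth is needed; the same estimate on $S_n$ also yields the upper bound $1/\sqrt{2c}$ in (4) by taking $c(n)\equiv c$.

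To estimate $S_n$ I would pass to an integral representation: since $\frac1i = \int_0^1 x^{i-1}\,dx$, the binomial theorem gives
\[
S_n = \int_0^1 \frac{(1+x)^n - 1}{x}\,dx .
\]
The integrand $g(x) = ((1+x)^n-1)/x$ is nonnegative and nondecreasing on $[0,1]$: writing $g'(x) = h(x)/x^2$ with $h(x) = nx(1+x)^{n-1}-(1+x)^n+1$, one checks $h(0)=0$ and $h'(x) = n(n-1)x(1+x)^{n-2}\ge 0$. Now split the integral at $x = 1 - a_n$ for a slowly vanishing $a_n$, e.g. $a_n = (\log n)^2/n$. On $[0,1-a_n]$, monotonicity gives $\int_0^{1-a_n} g \le (1-a_n)\,g(1-a_n) \le (2-a_n)^n \le 2^n e^{-n a_n/2} = 2^n e^{-(\log n)^2/2} = o(2^n/n)$. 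On $[1-a_n,1]$, bound $1/x \le 1/(1-a_n)$ and integrate the polynomial: $\int_{1-a_n}^1 g \le \frac{1}{1-a_n}\int_{1-a_n}^1 (1+x)^n\,dx \le \frac{1}{1-a_n}\cdot\frac{2^{n+1}}{n+1} = (1+o(1))\,\frac{2^{n+1}}{n}$. Adding the two contributions yields $S_n \le (1+o(1))\,2^{n+1}/n$, and substituting back completes the proof.

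The only genuine obstacle is the sharpness of the constant in the bound on $S_n$: the Wasserstein estimate is governed by $\sqrt{S_n}$, so obtaining leading constant exactly $2$ in $S_n \lesssim 2^{n+1}/n$ is precisely what produces $\sqrt2/2$, whereas a cruder estimate — e.g. bounding $1/x \le 2$ uniformly on $[1/2,1]$ — would give $4\cdot 2^n/n$ and only the weaker bound $\sqrt2$. Hence the split point $1-a_n$ must tend to $1$, and one needs $n a_n \to \infty$ (so the $[0,1-a_n]$ contribution is negligible compared with $2^n/n$) together with $a_n \to 0$ (so the factor $1/(1-a_n)$ is asymptotically harmless); the choice $a_n = (\log n)^2/n$ meets both requirements.
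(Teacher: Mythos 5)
Your proof is correct, and it starts exactly as the paper does (Lemma~2.1 with $\epsilon=0$, reduction to estimating $S_n=\sum_{i=1}^n\frac1i\binom ni$), but you handle $S_n$ by a genuinely different method. The paper establishes $\lim_n \frac{n}{2^n}S_n=2$ by splitting the sum at $i\in[n/2-n^{2/3},\,n/2+n^{2/3}]$: on the bulk window one has $\frac1i=\frac2n(1+o(1))$ and the binomial coefficients sum to $2^n(1-o(n^{-1}))$, while the tails are killed by a standard concentration estimate (the paper actually routes this through the entropy bound $(23)$ and inequality $(26)$). You instead use the integral representation $\frac1i=\int_0^1 x^{i-1}\,dx$ to write $S_n=\int_0^1\frac{(1+x)^n-1}{x}\,dx$, then split at $x=1-a_n$ with $a_n=(\log n)^2/n$, bound the left piece by monotonicity of the integrand (which is clear since $g(x)=\sum_{i\ge1}\binom ni x^{i-1}$ has nonnegative coefficients — your explicit derivative check is fine but unnecessary), and compute the right piece by integrating $(1+x)^n$ exactly. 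Both choices of split parameter satisfy the same two competing requirements (the excluded piece must be $o(2^n/n)$ while the window must shrink so the $1/i$ or $1/x$ factor is asymptotically $2/n$), and both yield the sharp constant $2$; your route avoids any appeal to binomial tail bounds at the cost of a change of representation, which is a clean and arguably more elementary argument. One small wording quibble: the lemma's conclusion is an \emph{upper} bound, so it is only \emph{implied by}, not equivalent to, $S_n\le(1+o(1))2^{n+1}/n$ — equivalence would require knowing that the chain of inequalities starting from Lemma~2.1 is tight, which is not claimed; your proof only needs the implication, so this does not affect correctness.
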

\begin{proof}
    We borrow Lemma 2.1 with $\epsilon = 0$.
    
    \begin{align*}
        \mathbb{E}[\mathsf{W}_1(\mu_N,\mu)] \leq \frac{\sqrt{n}}{2\sqrt{c(n)2^n}}\left(\sum_{i=1}^{n}\frac{1}{i}{n\choose i}\right)^\frac{1}{2}.
    \end{align*}
    
    Then we observe the following:
    
    \begin{align*}
        \lim_n\frac{n}{2^n}\sum_{i=1}^{n}\frac{1}{i}{n\choose i}  
        &= \lim_n \frac{n}{2^n} \sum_{i\in[\frac{n}{2}-n^{2/3},\frac{n}{2}+n^{2/3}]} \frac{1}{i}{n\choose i}  + \lim_n \frac{n}{2^n} \sum_{i\notin[\frac{n}{2}-n^{2/3},\frac{n}{2}+n^{2/3}]} \frac{1}{i}{n\choose i}\\
        &= \lim_n \frac{n}{2^n}\cdot 2^n(1-o(n^{-1}))\left(\frac{2}{n}+o(n^{-1})\right) + \lim_n \frac{n}{2^n}\cdot 2^no(n^{-1})O(1)\\
        &= 2+0=2.
    \end{align*}
    
    The second equality follows from the observation that $\frac{1}{2^n}\sum_{i \notin [\frac{n}{2} - n^{2/3}, \frac{n}{2} + n^{2/3}]} \binom{n}{i} = o(n^{-1})$, which can be established using standard concentration inequalities. For a more concrete proof, we can utilize equations (23) and (26) in later sections. Here, $B(x, r)$ is defined above equation (23). Specifically, we have  
    \begin{align*}
        \frac{1}{2^n} \sum_{i \notin [\frac{n}{2} - n^{2/3}, \frac{n}{2} + n^{2/3}]} \binom{n}{i}  
        &\leq \frac{2 |B(x, \frac{n}{2} - n^{2/3})|}{2^n}, \\
        &= e^{n(-\log(2) + H(\frac{1}{2} - n^{-1/3})) + \log(2)}, \\
        &\leq e^{n(-\log(2) + \log(2) - 2n^{-2/3}) + \log(2)}, \\
        &= e^{-2n^{1/3} + \log(2)} = o(n^{-1}).
    \end{align*}

    Therefore, we have
    
    \begin{align*}
        \limsup_n \sqrt{c(n)}\mathbb{E}[\mathsf{W}_1(\mu_N,\mu)] \leq \limsup_n \frac{1}{2}\left(\frac{n}{2^n}\sum_{i=1}^{n}\frac{2}{n}{n\choose i} \right)^{\frac{1}{2}} = \frac{\sqrt{2}}{2}.
    \end{align*}
    
\end{proof}
Therefore, we have successfully proven the upper bound for (4) and (5) by taking $c(n)$ or $c(n) = c$. Now, all we need to prove is the lower bound.

\begin{proof}[proof of (4) and (5)]
    The upper bounds in (4) and (5) follow directly from Lemma 3.1. For the lower bounds, we use the total variation distance. Suppose $N = cn$. Then by (16),
    
    \begin{align*}
        \liminf_n \mathbb{E}[\mathsf{W}_1(\mu_N,\mu)] 
        &\geq \liminf_n \mathbb{E}[\mathsf{TV}(\mu_N, \mu)] \\
        &= \liminf_n \sum_{x \in \mathbb{B}} \mathbb{E}[(\mu(x) - \mu_N(x))^+] \\
        &\geq \liminf_n \sum_{x \in \mathbb{B}} \frac{1}{2^n} \mathbb{P}(\mu_N(x) = 0) \\
        &= e^{-c}.
    \end{align*}
    
    The second equality follows from the linearity of expectation. The final equality comes from the observation that
    
    \begin{align*}
        \lim_n \mathbb{P}(\mu_{c 2^n}(x) = 0) = \lim_n \left(1 - \frac{1}{2^n}\right)^{c 2^n} = e^{-c}.
    \end{align*}

    Now, suppose $N = c(n) 2^n$ and $c(n) \to \infty$. Then, by the Central Limit Theorem, for any $x \in \mathbb{B}$,
    
\begin{align*}
    2^n \sqrt{c(n)} (\mu_N(x) - \mu(x)) \to N(0,1).
\end{align*}

Note that by the linearity of expectation,

\begin{align*}
    \lim_n \sqrt{c(n)} \mathbb{E}[\mathsf{TV}(\mu_N, \mu)] &= \lim_n \sqrt{c(n)} \sum_{x \in \mathbb{B}} \mathbb{E}[(\mu_N(x) - \mu(x))^+].
\end{align*}

By the symmetry of $\mu$ and $\mathbb{B}$, we have

\begin{align*}
    \sum_{x \in \mathbb{B}} \mathbb{E}[(\mu_N(x) - \mu(x))^+] = 2^n \mathbb{E}[(\mu_N(y) - \mu(y))^+]
\end{align*}

for any fixed $y \in \mathbb{B}$. Thus,

\begin{align*}
    \lim_n \sqrt{c(n)} \mathbb{E}[\mathsf{TV}(\mu_N, \mu)] &= \lim_n \mathbb{E}\left[\sqrt{c(n)} 2^n (\mu_N(y) - \mu(y))^+\right] \\
    &= \mathbb{E}[N(0,1)^+] \\
    &= \sqrt{\frac{1}{2\pi}}.
\end{align*}

Together with (16), we obtain the lower bound in (5).

\end{proof}

\section{For small size $N$}
Now, we focus on the case when $N = n^{\alpha}$. In this case, we need to consider the $\epsilon$-diffusion in Lemma 2.1. The question is: what value of $\epsilon$ should we choose? 

We begin by presenting a well-known bound for the binomial coefficient $\binom{n}{i}$, as referenced in \cite{ash2012information}:  

\begin{align}
    \binom{n}{i} \leq e^{H\left(\frac{i}{n}\right)n},
\end{align}  

where $H(x) = -x\log(x) - (1-x)\log(1-x)$ is the entropy function for a $Bernoulli(x)$ random variable. This function $H$ has already been introduced in (3), and it will be consistently applied throughout the remainder of this paper.  

Now, we proceed to state the main lemma of this section.

\begin{lemma}
    For any $N\geq n^2$, we have
    
    \begin{align}
        \mathbb{E}[\mathsf{W}_1(\mu_N,\mu)] \leq \frac{1-\sqrt{e^{\frac{\log(N)-2\log(n)}{n}}-1}}{2}\cdot n + 1.
    \end{align}
    
\end{lemma}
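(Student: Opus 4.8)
The plan is to apply Lemma 2.1 and choose $\epsilon$ to balance the two terms on the right-hand side of (15). First I would find a clean upper bound for the sum $\sum_{i=1}^n \frac{1}{i}\binom{n}{i}(1-2\epsilon)^{2i}$. The factor $\frac{1}{i}$ is at most $1$ for $i \geq 1$, so the sum is bounded by $\sum_{i=1}^n \binom{n}{i}(1-2\epsilon)^{2i} \leq \sum_{i=0}^n \binom{n}{i}(1-2\epsilon)^{2i} = (1 + (1-2\epsilon)^2)^n$ by the binomial theorem. Substituting this into (15) gives
\begin{align*}
    \mathbb{E}[W(\mu_N,\mu)] \leq \frac{\sqrt{n}}{2\sqrt{N}}\left(1 + (1-2\epsilon)^2\right)^{n/2} + \epsilon n.
\end{align*}

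Next I would choose $\epsilon$ so that the first term is at most $1$, i.e. so that $\frac{\sqrt{n}}{2\sqrt{N}}(1+(1-2\epsilon)^2)^{n/2} \leq 1$. Taking logarithms, this requires $(1-2\epsilon)^2 \leq e^{(\log N - \log n)/n} \cdot \text{(something)} - 1$; being slightly careful with the $\sqrt{n}/2$ prefactor, one wants $n\log(1+(1-2\epsilon)^2) \leq \log N - \log n + 2\log 2 \cdot(\text{or just } \log N - 2\log n$ to be safe and absorb the prefactor). Solving for $(1-2\epsilon)^2$ yields $(1-2\epsilon)^2 = e^{(\log N - 2\log n)/n} - 1$, hence $\epsilon = \frac{1 - \sqrt{e^{(\log N - 2\log n)/n} - 1}}{2}$, which is exactly the expression appearing in the statement. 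The hypothesis $N \geq n^2$ ensures $\log N - 2\log n \geq 0$, so $e^{(\log N - 2\log n)/n} - 1 \geq 0$ and the square root is real; one should also check this $\epsilon$ lies in $[0, \tfrac12]$ so that Lemma 2.1 applies — the upper bound $\epsilon \leq \tfrac12$ is clear since the square root is nonnegative, and $\epsilon \geq 0$ needs $e^{(\log N - 2\log n)/n} - 1 \leq 1$, i.e. $N \leq 2^n n^2$ roughly; for larger $N$ one simply takes $\epsilon = 0$ and the bound still holds (or one notes the claimed bound is only interesting in the stated regime).

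With this choice of $\epsilon$, the first term is $\leq 1$ and the second term is $\epsilon n = \frac{1 - \sqrt{e^{(\log N - 2\log n)/n}-1}}{2}\cdot n$, giving precisely (20). The main obstacle, such as it is, is bookkeeping the constant/prefactor: one must verify that the $\frac{\sqrt n}{2}$ (rather than just $1$) in front of $N^{-1/2}(1+(1-2\epsilon)^2)^{n/2}$ is correctly absorbed — this is why the exponent is $\log N - 2\log n$ rather than $\log N - \log n$, since $\frac{\sqrt n}{2\sqrt N}(\cdots)^{n/2} \leq 1 \iff (1+(1-2\epsilon)^2)^n \leq \frac{4N}{n} \iff n\log(1+(1-2\epsilon)^2) \leq \log N - \log n + \log 4$, and dropping $\log 4 \geq 0$ while replacing $\log n$ by $2\log n$ only weakens the requirement, so it remains valid. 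I would then simply state that the substitution yields the claimed inequality, noting that all steps are routine algebra once the choice of $\epsilon$ is made.
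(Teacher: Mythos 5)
Your proposal is correct, and the overall strategy is the same as the paper's: invoke Lemma~2.1, drop the $\tfrac{1}{i}$ factor, bound the remaining sum by $(1+(1-2\epsilon)^2)^n$, then pick $\epsilon$ to make the exponential term at most $1$. The one place you diverge is in how that bound on the sum is obtained. You apply the binomial theorem directly, getting the exact identity $\sum_{i=0}^{n}\binom{n}{i}(1-2\epsilon)^{2i}=(1+(1-2\epsilon)^2)^n$. The paper instead writes $\sum_{i=1}^{n}\binom{n}{i}(1-2\epsilon)^{2i}\le n\sup_i \binom{n}{i}(1-2\epsilon)^{2i}$, applies the entropy bound $\binom{n}{i}\le e^{H(i/n)n}$, and then maximizes $g(k)=H(k)+2k\log(1-2\epsilon)$ by calculus to recover $\sup_k g(k)=\log(1+(1-2\epsilon)^2)$. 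Your route is both shorter and tighter (no extra factor of $n$); the paper's route carries a spare $n$ but the authors compensate by also discarding the factor $\tfrac12$ from Lemma~2.1. Both land on the same target inequality $n\log(1+(1-2\epsilon)^2)\le \log N-2\log n$ and hence the same $\epsilon$. The paper's detour through $H$ is presumably because the entropy bound is reused heavily in Sections~4 and~5, but for this lemma in isolation your binomial-theorem argument is the cleaner one.

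One small imprecision worth flagging: your remark that ``for larger $N$ one simply takes $\epsilon=0$ and the bound still holds'' is not quite right. With $\epsilon=0$ and $N$ large you get $\mathbb{E}[W(\mu_N,\mu)]\le 1$, but for $N$ sufficiently larger than $2^nn^2$ the claimed right-hand side $\frac{1-\sqrt{e^{(\log N-2\log n)/n}-1}}{2}n+1$ drops below $1$ (and eventually below $0$), so $\mathbb{E}[W]\le 1$ does not imply it. Your parenthetical (``the claimed bound is only interesting in the stated regime'') is the honest resolution: the lemma implicitly needs $\epsilon\ge 0$, i.e.\ $N\le 2^nn^2$, a restriction the paper's proof also leaves tacit, and which is harmless since the lemma is applied only with $N=n^\alpha$.
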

\begin{proof}
Note that in the summation of Lemma 2.1, the term $\frac{1}{i}$ is polynomial, while both ${n \choose i}$ and $(1 - 2\epsilon)^{2i}$ are exponential. Therefore, we can discard the "trivial" $\frac{1}{i}$ term and focus solely on the exponential terms.

\begin{align*}
    \mathbb{E}[\mathsf{W}_1(\mu_N,\mu)] 
    &\leq \frac{\sqrt{n}}{\sqrt{N}}\left(\sum_{i=1}^{n}{n\choose i}(1-2\epsilon)^{2i}\right)^{\frac{1}{2}} + \epsilon n\\
    &\leq \frac{\sqrt{n}}{\sqrt{N}}\left(n\sup_{i\in [n]}\left\{{n\choose i}(1-2\epsilon)^{2i}\right\}\right)^{\frac{1}{2}} + \epsilon n\\
    &\leq\exp\left\{\log(n)-\frac{1}{2}\log(N)+\frac{1}{2}\sup_{i\in [n]}\left\{nH(\frac{i}{n})+2i\log(1-2\epsilon) \right\}\right\} + \epsilon n\\
    &\leq \exp\left\{\log(n)-\frac{1}{2}\log(N)+\frac{n}{2}\sup_{0\leq k\leq 1}\left\{H(k)+2k\log(1-2\epsilon) \right\}\right\} + \epsilon n. 
\end{align*}

The last inequality follows by taking $k = \frac{i}{n}$ and maximizing over a larger continuous set. Let $g(k) = H(k) + 2k\log(1 - 2\epsilon) = -k\log(k) - (1 - k)\log(1 - k) + 2k\log(1 - 2\epsilon)$. We find its maximum by taking derivatives.

\begin{align*}
    \frac{d}{dk} g(k) = -\log(k) + \log(1-k)+2\log(1-2\epsilon) = \log\left(\frac{(1-k)(1-2\epsilon)^2}{k}\right) = 0,
\end{align*}

which implies

\begin{align*}
    (1-k)(1-2\epsilon)^2 = k \implies k = \frac{(1-2\epsilon)^2}{1+(1-2\epsilon)^2}.
\end{align*}

Next, we take the second derivative. For $0 < k < 1$, we have

\begin{align*}
    \frac{d^2}{dk^2} g(k) = -\frac{1}{k}-\frac{1}{1-k} < 0.
\end{align*}

Therefore, the unique maximizer for $g(k)$ is $k = \frac{(1-2\epsilon)^2}{1+(1-2\epsilon)^2}$. In this case,

\begin{align*}
    g(k) 
    &= -\frac{(1-2\epsilon)^2}{1+(1-2\epsilon)^2}\log\left(\frac{(1-2\epsilon)^2}{1+(1-2\epsilon)^2}\right) - \frac{1}{1+(1-2\epsilon)^2} \log\left(\frac{1}{1+(1-2\epsilon)^2}\right) + 2\frac{(1-2\epsilon)^2}{1+(1-2\epsilon)^2}\log(1-2\epsilon)\\
    &= \log(1+(1-2\epsilon)^2)-\frac{(1-2\epsilon)^2}{1+(1-2\epsilon)^2}\log((1-2\epsilon)^2) + 2\frac{(1-2\epsilon)^2}{1+(1-2\epsilon)^2}\log(1-2\epsilon)\\
    &= \log(1+(1-2\epsilon)^2).
\end{align*}

We then bring it back to bound the Wasserstein distance.

\begin{align*}
    \mathbb{E}[\mathsf{W}_1(\mu_N,\mu)]
    &\leq \exp\left\{\log(n)-\frac{1}{2}\log(N)+\frac{n}{2}\sup_{0\leq k\leq 1}\left\{g(k) \right\}\right\} + \epsilon n\\
    &= \exp\left\{\log(n)-\frac{1}{2}\log(N)+\frac{n}{2}\log(1+(1-2\epsilon)^2)\right\} + \epsilon n.
\end{align*}

Now, we want to choose an $\epsilon$ such that the term in the exponent is smaller than 0. In this case, we have $\mathbb{E}[\mathsf{W}_1(\mu_N,\mu)] \leq 1 + \epsilon n$. To achieve this, we need

\begin{align*}
    \log(n)-\frac{1}{2}\log(N)+\frac{n}{2}\log(1+(1-2\epsilon)^2) \leq 0
    &\iff \log(1+(1-2\epsilon)^2) \leq \frac{2}{n}(\frac{1}{2}\log(N)-\log(n))\\
    &\iff  (1-2\epsilon)^2 \leq e^{\frac{\log(N)-2\log(n)}{n}}-1\\
    &\iff \epsilon \geq \frac{1-\sqrt{e^{\frac{\log(N)-2\log(n)}{n}}-1}}{2}.
\end{align*}

Therefore, by picking such an $\epsilon$, we obtain

\begin{align*}
    \mathbb{E}[\mathsf{W}_1(\mu_N,\mu)] \leq e^{0} + \frac{1-\sqrt{e^{\frac{\log(N)-2\log(n)}{n}}-1}}{2}\cdot n = \frac{1-\sqrt{e^{\frac{\log(N)-2\log(n)}{n}}-1}}{2}\cdot n + 1
\end{align*}

\end{proof}
Now, let's explore how Lemma 4.1 can be applied to small values of $N$, specifically when $N = n^{\alpha}$ for $\alpha \geq 2$. We begin by applying the simple inequality $e^x \geq 1 + x$. Using this, along with Lemma 4.1, we obtain the following bound:

\begin{align}
    \mathbb{E}[\mathsf{W}_1(\mu_N,\mu)] &\leq \frac{1 - \sqrt{\frac{\log(N) - 2\log(n)}{n}}}{2} \cdot n + 1 =\frac{n}{2} - \frac{\sqrt{\log(N) - 2\log(n)}}{2} \cdot \sqrt{n} + 1.
\end{align}

From this bound, we can deduce a slightly stronger result than what is stated in equation (2) of the main theorem, particularly when $\alpha = 2$. We have decided to exclude this case from the main theorem and present the result as a corollary instead.

\begin{corollary}
    Suppose $N = cn^2$ for $c>1$, then
    
\begin{align}
    \mathbb{E}[\mathsf{W}_1(\mu_{N},\mu)] \leq \frac{n}{2}-\frac{\sqrt{\log(c)}}{2}\sqrt{n}+1.
\end{align}

    Suppose $N = n^{\alpha}$ for $\alpha \geq 2$, then
    
\begin{align}
    \mathbb{E}[\mathsf{W}_1(\mu_{N},\mu)] \leq \frac{n}{2}-\frac{\sqrt{(\alpha-2)\log(n)}}{2}\sqrt{n}+1.
\end{align}

\end{corollary}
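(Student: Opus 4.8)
The plan is to read off both inequalities directly from~(23), which was obtained from Lemma~4.1 via the elementary inequality $e^x \ge 1+x$ and holds whenever $N \ge n^2$. First I would check that both regimes satisfy this hypothesis: for $N = cn^2$ with $c>1$ one has $N > n^2$, and for $N = n^\alpha$ with $\alpha \ge 2$ one has $N \ge n^2$, so~(23) applies in each case, and the only remaining task is to simplify the quantity $\log N - 2\log n$ sitting under the square root in~(23).

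For the first claim I would substitute $N = cn^2$, so that $\log N - 2\log n = \log c + 2\log n - 2\log n = \log c$; feeding this into~(23) gives $\mathbb{E}[W(\mu_N,\mu)] \le \frac{n}{2} - \frac{\sqrt{\log c}}{2}\sqrt{n} + 1$, which is exactly~(24), and the hypothesis $c>1$ is precisely what makes $\log c>0$ so that the radical is real and the bound nontrivial. For the second claim I would substitute $N = n^\alpha$, so that $\log N - 2\log n = (\alpha-2)\log n$; then~(23) yields $\mathbb{E}[W(\mu_N,\mu)] \le \frac{n}{2} - \frac{\sqrt{(\alpha-2)\log n}}{2}\sqrt{n} + 1$, which is~(25), with $\alpha \ge 2$ ensuring the radicand is nonnegative.

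I do not expect any genuine obstacle here: the corollary is a direct specialization of~(23), so the main point is just bookkeeping. The one thing worth spelling out is why the relaxation leading from Lemma~4.1 to~(23) goes in the correct direction. In Lemma~4.1 the radical is $\sqrt{\,e^x-1\,}$ with $x = \tfrac{1}{n}(\log N - 2\log n)\ge 0$, and since $e^x-1\ge x$ we have $\sqrt{\,e^x-1\,}\ge\sqrt{x}$; because this term enters the upper bound for $\mathbb{E}[W(\mu_N,\mu)]$ with a minus sign, replacing it by the smaller $\sqrt{x}$ can only enlarge the right-hand side, so~(23)---and hence~(24) and~(25)---remains a valid upper bound. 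If one wanted marginally sharper constants one could instead substitute directly into Lemma~4.1 without this step, but for the statements of Corollary~4.2 as written the weaker form~(23) is all that is needed.
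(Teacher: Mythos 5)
Your proposal matches the paper's proof exactly: the paper states that the corollary follows by substituting the given values of $N$ into the intermediate bound $\mathbb{E}[W(\mu_N,\mu)] \leq \frac{n}{2} - \frac{\sqrt{\log N - 2\log n}}{2}\sqrt{n} + 1$ obtained from Lemma~4.1 via $e^x \geq 1+x$, which is precisely what you do. Your added remark verifying that the $e^x - 1 \geq x$ relaxation moves the bound in the correct (weaker) direction is sound and a nice bit of explicitness the paper leaves implicit.
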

Corollary 4.2 is proved by substituting the value of $N$ into equation (20). Now, we can proceed with the proof of inequality (2) in the main theorem.

\begin{proof}[proof of (2)]
The upper bound (the left-most inequality in (2)) follows directly from Corollary 4.2, so we will focus only on the lower bound. First, we define the ball $B(x, r)$ on $\mathbb{B}$ for $x \in \mathbb{B}$ and $r \in \mathbb{N}$ as

\begin{align*}
    B(x, r) &= \{y \in \mathbb{B} \mid d(x, y) \leq r\}.
\end{align*}

Note that $|B(x, r)| = \sum_{i=0}^{r} {n \choose i}$. By well-known inequalities (see \cite{ash2012information}, page 121), for $r \leq \frac{n}{2}$, we have the following bound.

\begin{align}
    e^{H(\frac{r}{n})n-\frac{1}{2}\log(2n)} \leq |B(x,r)| \leq e^{H(\frac{r}{n})n}.
\end{align}

Now, let’s return to the lower bound. Consider the set

\begin{align*}
    \bigcup_{i=1}^N B(X_i, r).
\end{align*}

If the size of this set is small compared to $2^n$, then most points on $\mathbb{B}$ must be moved a distance larger than $r + 1$ to find an $X_i$. In this case, the expected Wasserstein distance would be greater than $(r + 1) \cdot \mathbb{P}(x \notin \bigcup_{i=1}^N B(X_i, r))$. If $\mathbb{P}(x \notin \bigcup_{i=1}^N B(X_i, r)) \geq \frac{n-1}{n}$, then

\begin{align*}
    \mathbb{E}[\mathsf{W}_1(\mu_N,\mu)] \geq (r + 1) \cdot \frac{n-1}{n} \geq r
\end{align*}

since $r \leq \frac{n}{2}$. To achieve the goal, it is sufficient to find an $r$ that satisfies the following inequality (*):

\begin{align}
    \frac{|\bigcup_{i=1}^N B(X_i, r)|}{2^n} &\leq \frac{N |B(X_1, r)|}{2^n} \leq \frac{N e^{H\left( \frac{r}{n} \right) n}}{2^n} \stackrel{*}{\leq} \frac{1}{n}.
\end{align}

In this case, $N = n^\alpha$, so we need to find the largest $r \leq \frac{n}{2}$ such that

\begin{align}
    H\left(\frac{r}{n}\right) &\leq \frac{1}{n} \log \left( \frac{2^n}{n^{\alpha+1}} \right) = \log(2) - \frac{(\alpha+1)\log(n)}{n}.
\end{align}

We claim that for all $0 \leq x \leq 1$,

\begin{align}
    H(x) &\leq \log(2) - 2 \left( x - \frac{1}{2} \right)^2.
\end{align}

One way to prove this is by considering the function $h(x) = H(x) + 2 \left( x - \frac{1}{2} \right)^2 = -x \log(x) - (1-x) \log(1-x) + 2 \left( x - \frac{1}{2} \right)^2$. We compute the first derivative:

\begin{align*}
    \frac{d}{dx} h(x) &= -\log(x) + \log(1 - x) + 4 \left( x - \frac{1}{2} \right) = 0.
\end{align*}

One solution to this equation is $x = \frac{1}{2}$. To confirm that this is the maximum, we compute the second derivative:

\begin{align*}
    \frac{d^2}{dx^2} h(x) &= -\frac{1}{x} - \frac{1}{1 - x} + 4 \leq 0
\end{align*}

for $0 < x < 1$. This shows that $x = \frac{1}{2}$ is the unique maximizer. In this case, $h\left(\frac{1}{2}\right) = \log(2)$, proving inequality (26). Returning to inequality (25), and taking $r = \frac{n}{2} - \sqrt{\frac{(\alpha+1)\log(n)}{2n}}$, we get

\begin{align*}
    H\left(\frac{r}{n}\right) &= H\left( \frac{1}{2} - \sqrt{\frac{(\alpha+1)\log(n)}{2n}} \right) \leq \log(2) - \frac{(\alpha+1)\log(n)}{n}.
\end{align*}

Therefore, we have

\begin{align*}
    \mathbb{E}[\mathsf{W}_1(\mu_N,\mu)] \geq r =\frac{n}{2} - \sqrt{\frac{\alpha+1}{2}n\log(n)},
\end{align*}

which implies the lower bound in (2).
\end{proof}

\section{For intermediate size $N$}

For intermediate values of $N$, one might suggest using Lemma 4.1. However, the result provided by Lemma 4.1 is not "accurate" for $N = e^{\lambda n}$, where $0 < \lambda < \log(2)$. Interested readers can try this themselves. Since the Fourier transform method does not help in this case, we entirely abandon it and instead use a different method, namely, the large deviation method.

We first define a different diffusion process than the one introduced in Section 2. For a function $f: \mathbb{B} \to \mathbb{R}$, which can also be interpreted as a measure, we define

\begin{align}
    P_r f(x) &= \frac{1}{|B(x, r)|} \sum_{y \in B(x, r)} f(y)
\end{align}

for $r \leq \frac{n}{2}$, where $B(x, r)$ is the ball defined in Section 4 with the size bound given in (23). To see the difference between this diffusion process and the $\epsilon$-diffusion introduced in Section 2, one can interpret $P_r \nu$ as the distribution of $y'$, where $y'$ is generated by first picking a random $x \sim \nu$, then choosing a uniform random point in $B(x, r)$. Since the diffusion never passes beyond the ball with radius $r$, we have for any probability measure $\nu$,

\begin{align*}
    \mathsf{W}_1(P_r \nu, \nu) &\leq r
\end{align*}

by coupling $y'$ with its generator $x$. Furthermore, by (8), we have

\begin{align*}
    \mathsf{W}_1(\mu_N,\mu) &\leq \mathsf{W}_1(P_r \mu_N, \mu) + \mathsf{W}_1(P_r \mu_N, \mu_N) \leq \mathsf{W}_1(P_r \mu_N, \mu) + r.
\end{align*}

We then use the total variation bound (16), the linearity of expectation, and symmetry to obtain

\begin{align*}
    \mathbb{E}[\mathsf{W}_1(\mu_N,\mu)] 
    &\leq \mathbb{E}[\mathsf{W}_1(P_r \mu_N, \mu)] + r \\
    &\leq n \mathbb{E}[\mathsf{TV}(P_r \mu_N, \mu)] + r \\
    &= \left( n \sum_{x \in \mathbb{B}} \mathbb{E} \left[ \left( P_r \mu_N(x) - \frac{1}{2^n} \right)^+ \right] \right) + r \\
    &= \left( n 2^n \mathbb{E} \left[ \left( P_r \mu_N(y) - \frac{1}{2^n} \right)^+ \right] \right) + r,
\end{align*}

where $y$ is any fixed point in $\mathbb{B}$. Note that for any $\delta > 0$, we have

\begin{align*}
    \mathbb{E} \left[ \left( P_r \mu_N(y) - \frac{1}{2^n} \right)^+ \right] &\leq \mathbb{P} \left( P_r \mu_N(y) \geq (1 + \delta) \frac{1}{2^n} \right) + \frac{\delta}{2^n}.
\end{align*}

Therefore, we have

\begin{align}
    \mathbb{E}[\mathsf{W}_1(\mu_N,\mu)] \leq n2^n\mathbb{P}\left(P_r\mu_N(y) \geq (1+\delta)\frac{1}{2^n}\right) + \delta n + r
\end{align}

To bound the probability above, we have the following lemma.

\begin{lemma}
    Denote $p = \frac{|B(y,r)|}{2^n}$, we have
    
    \begin{align}
        \mathbb{P}\left(P_r\mu_N(y)> (1+\delta)\frac{1}{2^n}\right) \leq e^{N(\log(1+p\delta) - \log(1+\delta)(p+p\delta))},
    \end{align}
    
    and therefore
    
    \begin{align}
        \mathbb{E}[\mathsf{W}_1(\mu_N,\mu)] \leq n2^ne^{N(\log(1+p\delta) - \log(1+\delta)(p+p\delta))} + \delta n + r
    \end{align}
    
    for any $\delta>0$ and $r\leq \frac{n}{2}$.
\end{lemma}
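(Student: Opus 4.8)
The plan is to recognize that $P_r\mu_N(y)$ is, up to a deterministic scaling, a Binomial random variable, and then apply an exponential Markov (Chernoff) bound with a conveniently chosen parameter, after which (31) drops out of (29) with no extra work.

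First I would unwind the definitions. Since $\mu_N(z) = \frac{1}{N}\sum_{i=1}^N \mathbb{I}(X_i = z)$ and $P_r\mu_N(y) = \frac{1}{|B(y,r)|}\sum_{z\in B(y,r)}\mu_N(z)$, swapping the two sums gives
\begin{align*}
P_r\mu_N(y) \;=\; \frac{1}{N|B(y,r)|}\sum_{i=1}^N \mathbb{I}\big(X_i \in B(y,r)\big) \;=\; \frac{M}{N|B(y,r)|},
\end{align*}
where $M := \sum_{i=1}^N \mathbb{I}(X_i\in B(y,r))$. Because the $X_i$ are i.i.d.\ uniform on $\mathbb{B}$, because $|B(x,r)|$ is the same for every $x$ by vertex-transitivity of $\mathbb{B}$, and because $\mu(B(y,r)) = |B(y,r)|/2^n = p$, the variable $M$ is exactly $\mathrm{Binomial}(N,p)$ with mean $Np$. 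Writing $|B(y,r)| = p\,2^n$, the event in (30) becomes
\begin{align*}
P_r\mu_N(y) > (1+\delta)\tfrac{1}{2^n} \iff \frac{M}{Np\,2^n} > \frac{1+\delta}{2^n} \iff M > (1+\delta)Np,
\end{align*}
so everything reduces to an upper-tail estimate for a Binomial evaluated at $1+\delta$ times its mean.

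Next I would apply the exponential Markov inequality with the parameter $t=\log(1+\delta)>0$ (this is the step where one trades the exact Chernoff optimum for a clean closed form):
\begin{align*}
\mathbb{P}\big(M \geq (1+\delta)Np\big) \leq e^{-t(1+\delta)Np}\,\mathbb{E}\big[e^{tM}\big] = (1+\delta)^{-(1+\delta)Np}\big(1-p+p e^{t}\big)^{N}.
\end{align*}
Since $e^t = 1+\delta$, the moment-generating factor collapses, $1-p+p e^{t} = 1+p\delta$, so the right-hand side equals $(1+\delta)^{-(1+\delta)Np}(1+p\delta)^N = e^{N(\log(1+p\delta)-(p+p\delta)\log(1+\delta))}$, which is precisely the bound in (30); note this argument in fact controls $\mathbb{P}(M\ge(1+\delta)Np)$, hence a fortiori the strict-inequality event in the lemma statement as well as the $\ge$-event needed in (29). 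Finally, inserting this estimate into (29), which was already derived just above the lemma, gives (31) immediately.

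All the computations are routine; the only place that needs genuine care is the very first translation step, namely correctly identifying $P_r\mu_N(y)$ with the scaled Binomial count $M/(N|B(y,r)|)$ and keeping the normalizations straight. One could instead optimize $t$ fully, obtaining the sharper Kullback--Leibler exponent $-N\,D\big((1+\delta)p\,\big\|\,p\big)$, but the stated form is the one that feeds cleanly into the later optimization over $\delta$ and $r$, so I would not pursue that refinement here.
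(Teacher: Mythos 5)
Your proof is correct and takes essentially the same approach as the paper: both identify $N|B(y,r)|\,P_r\mu_N(y)$ as a $\mathrm{Binomial}(N,p)$ count and apply the exponential Markov inequality with the parameter $\lambda = \log(1+\delta)$ to obtain the stated exponent. The only cosmetic difference is that the paper motivates this choice of $\lambda$ by first computing the exact Chernoff minimizer and then approximating $p\approx 0$, whereas you select $\log(1+\delta)$ directly and justify it by its clean algebraic collapse; the resulting computation and bound are identical.
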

\begin{proof}
    Note that $\mu_N$ is generated by i.i.d. uniform points $X_1, \dots, X_N$. Denote $\mathbb{I}_i(y, r)$ as the indicator function that $X_i \in B(y, r)$. Then, we have $\mathbb{E}[\mathbb{I}_i(y, r)] = 2^{-n} |B(y, r)| = p$, and

\begin{align*}
    P_r \mu_N(y) &= \frac{1}{N | B(y, r) |} \sum_{i=1}^{N} \mathbb{I}_i(y, r).
\end{align*}

Using the exponential Markov inequality, for any $\lambda \geq 0$, we have

\begin{align*}
    \mathbb{P} \left( P_r \mu_N(y) > (1 + \delta) \frac{1}{2^n} \right) 
    &= \mathbb{P} \left( \sum_{i=1}^{N} \mathbb{I}_i(y, r) \geq N p (1 + \delta) \right) \\
    &\leq \inf_{\lambda} \mathbb{E} \left[ e^{\lambda \mathbb{I}_1(y, r)} \right]^N e^{-\lambda N p (1 + \delta)} \\
    &= \inf_{\lambda} (1 - p + p e^{\lambda})^N e^{-\lambda N p (1 + \delta)} \\
    &= \inf_{\lambda} e^{N \left( \log(1 - p + p e^{\lambda}) - \lambda p (1 + \delta) \right)}.
\end{align*}

    Now we need to decide which value of $\lambda$ to pick. A simple approach is to take the derivative of the exponent with respect to $\lambda$ and then set the derivative to 0.

Denote $l(\lambda) = \log(1 - p + p e^{\lambda}) - \lambda p (1 + \delta)$, then

\begin{align*}
    \frac{d}{d\lambda} l(\lambda) = 0 
    &\iff \frac{p e^{\lambda}}{1 - p + p e^{\lambda}} = p (1 + \delta) \\
    &\iff e^{\lambda} = \frac{(1 + \delta)(1 - p)}{1 - (1 + \delta) p}.
\end{align*}

If we look at (28) again, we want $\delta = O\left(\frac{1}{n}\right)$ and $p = O(e^{-cn})$ for some constant $c$, which implies $p \ll \delta$. Therefore, by setting $p = 0$ in the above equality, we can approximate $\lambda = \log(1 + \delta)$ as being close to the minimizer. Note that we don't need to pick the exact minimizer. We can simply substitute this $\lambda$ into the formula, yielding

\begin{align*}
    \mathbb{P} \left( P_r \mu_N(y) > \frac{1}{2^n}(1 + \delta) \right) 
    &\leq \inf_{\lambda} e^{N \left( \log(1 - p + p e^{\lambda}) - \lambda p (1 + \delta) \right)} \\
    &\leq e^{N \left( \log(1 + p \delta) - \log(1 + \delta)(p + p \delta) \right)}.
\end{align*}

And (30) follows directly from (28) and (29).

\end{proof} 
Let's focus on (30). Ideally, we want the first two terms in the inequality to be less than 1, in which case we can bound the distance by $r+2$. Therefore, we first choose $\delta = \frac{1}{n}$ for the second term. Next, we aim to find an $r$ for the first term such that

\begin{align}
    R(r) = n2^n e^{N \left( \log\left(1 + \frac{p}{n}\right) - \log\left(1 + \frac{1}{n}\right)\left(p + \frac{p}{n}\right) \right)} \leq 1.
\end{align}

With this setup, we now begin proving (3) in our main theorem.

\begin{proof}[proof of (3)]
For simplicity of notation, $\limsup_n f(n) \leq \limsup_n g(n)$ means there exists $N \in \mathbb{Z}$ such that $f(n) \leq g(n)$ for all $n \geq N$ (a similar definition applies for $\liminf_n$). 

We first prove the upper bound following the above argument. Take $r = H^{-1}(\log(2) - \lambda)n + C\log(n)$ for some $C > 0$. When we write $H^{-1}(*)$, we always require $0 \leq * \leq \log(2)$ so that its value is uniquely determined. Note that $H'(x) = -\log(x) + \log(1-x) \in (0, \infty)$ for $0 < x < \frac{1}{2}$. Then, by (23),

\begin{align*}
    p &= \frac{|B(y,r)|}{2^n} \\
    &\geq \exp\left\{-\frac{\log(2n)}{2}+n(H(\frac{r}{n}) - \log(2))\right\}\\
    &= \exp\left\{-\frac{\log(2n)}{2} + n\left(H\left(\frac{H^{-1}(\log(2)-\lambda)n}{n}+\frac{C\log(n)}{n}\right) - \log(2)\right)\right\}\\
    &= \exp\left\{-\frac{\log(2n)}{2} + n\left(-\lambda + H'(H^{-1}(\log(2)-\lambda))\cdot\frac{C\log(n)}{n} + O\left(\frac{\log(n)^2}{n^2}\right) \right) \right\}.
\end{align*}

Denote $r^* = H^{-1}(\log(2) - \lambda)$, which is the same as the $r^*$ in (3). Then, $r = r^*n + C\log(n)$, and for any $\epsilon > 0$,  
\begin{align}
    \liminf_n p \geq \liminf_n \exp\left\{-\frac{\log(2n)}{2} - \lambda n + (C - \epsilon)H'(r^*)\log(n)\right\}.
\end{align}

Recall $R(r)$ in (31). Note that Taylor's theorem tells us that $\log(1 + x) = x - \frac{x^2}{2} + O(x^3)$. Therefore, for any $\epsilon > 0$,

\begin{align*}
    \limsup_nR(r) 
    &= \limsup_n n2^n\exp\left\{N\left(\frac{p}{n}+O(\frac{p^2}{n^2}) - \left(\frac{1}{n}-\frac{1}{2n^2}+O(\frac{1}{n^3})\right)(p+\frac{p}{n})\right)\right\}\\
    &= \limsup_n \exp\left\{\log(n)+n\log(2) - N\left(\frac{p}{2n^2}+O(\frac{p}{n^3}+\frac{p^2}{n^2})\right) \right\}\\
    &\leq \limsup_n \exp\left\{\log(n)+n\log(2) - N\left(\frac{(1-\epsilon)p}{2n^2}\right) \right\}.
\end{align*}

The last inequality follows by observing that $p = o(1)$ when $\lambda > 0$. Now, we bring the lower bound of $p$ from (32) and $N = e^{\lambda n}$ into the above inequality to obtain, for any $\epsilon > 0$,

\begin{align*}
    \limsup_nR(r) 
    &\leq \limsup_n \exp\left\{\log(n)+n\log(2) - e^{\lambda n}\left(\frac{(1-\epsilon)\exp\{-\frac{\log(2n)}{2}-\lambda n + (C-\epsilon)H'(r^*)\log(n)\}}{2n^2}\right) \right\}\\
    &\leq \limsup_n \exp\left\{\log(n)+n\log(2) - \exp\left\{\frac{5\log(n)}{2} + (C-\epsilon)H'(r^*)\log(n)\right\} \right\}.
\end{align*}

In the last inequality, we place everything in the exponent and remove the constants by taking $C - 2\epsilon$, which is equivalent to $C - \epsilon$ by appropriately choosing another $\epsilon$. Now, if we take $C = \frac{3.5 + \epsilon}{H'(r^*)}$, we have

\begin{align*}
    \limsup_nR(r) \leq \limsup_n \exp\{\log(n)+n\log(2) - n^{1+\epsilon}\} = 0.
\end{align*}

Thus, by taking $r = r^*n + \frac{(3.5 + \epsilon)\log(n)}{H'(r^*)} = r^*n + \frac{(3.5 + \epsilon)\log(n)}{\log\left(\frac{1 - r^*}{r^*}\right)}$ and $\delta = \frac{1}{n}$ in (30), we obtain, for any $\epsilon > 0$,

\begin{align*}
    \limsup_n \mathbb{E}[\mathsf{W}_1(\mu_N,\mu)] \leq \limsup_n (0+1+r) = \limsup_n 1+ r^*n+ \frac{3.5+\epsilon}{\log(\frac{1-r^*}{r^*})}\log(n).
\end{align*}

And this implies the upper bound in (3). Now, we proceed to prove the lower bound for (3). The idea is very similar to the method used to prove the lower bound for (2) in the previous section. Recall equation (24); it suffices to find an $r$ such that

\begin{align*}
    \frac{Ne^{H(\frac{r}{n})n}}{2^n} \leq \frac{1}{n}.
\end{align*}

We take $r = r^*n - \frac{1 + \epsilon}{H'(r^*)} \log(n)$ for some $\epsilon > 0$ and $r^* = H^{-1}(\log(2) - \lambda)$. By substituting $N = e^{-\lambda}$, we obtain

\begin{align*}
    \limsup_n \frac{N e^{H\left(\frac{r}{n}\right) n}}{2^n} &= \limsup_n \exp \left\{ \lambda n + H\left(r^* - \frac{1 + \epsilon}{H'(r^*)} \cdot \frac{\log(n)}{n}\right) n - \log(2) n \right\} \\
    &= \limsup_n \exp \left\{ n[\lambda + H(r^*) - \log(2)] - (1 + \epsilon)\log(n) + O\left(\frac{\log(n)^2}{n}\right) \right\} \\
    &\leq \limsup_n \frac{1}{n}.
\end{align*}

The last inequality follows from $\lambda + H(r^*) - \log(2) = 0$. Therefore, by choosing such $r$, for any $\epsilon > 0$,

\begin{align*}
    \liminf_n \mathsf{W}_1(\mu_N,\mu) \geq \liminf_n r^*n - \frac{1+\epsilon}{H'(r^*)} = \liminf_n r^*n - \frac{1+\epsilon}{\log(\frac{1-r^*}{r^*})},
\end{align*}

which implies the lower bound in (3).
\end{proof}

\section{General properties of the matching problem}
As promised in the introduction, we will prove a few related theorems that can be applied to matching problems in other spaces. Section 6.1 concerns the problem of finding a measure $\nu$ that minimizes the expected matching distance to the empirical distribution $m_N$, that is, $\mathbb{E}[\mathsf{W}_1(m_N, \nu)]$. Section 6.2 establishes that the expected matching distance is a monotone decreasing function with respect to the number of samples $N$. Section 6.3 provides variance and concentration inequalities for the matching distance.

\subsection{A best guess for the empirical distribution}
Consider a probability measure $m$ and its empirical measure $m_N$ on the space $(X, d)$. A natural question is whether, among all possible probability measures on $X$, the measure $m$ is the "best guess" for approximating $m_N$.

Let $\mathcal{V}$ denote the set of all probability measures on $X$. To formally define what we mean by "best guess," we introduce  
\begin{align*}  
    \mathcal{M}_N = \argmin_{\nu \in \mathcal{V}} \mathbb{E}[\mathsf{W}_1(m_N, \nu)],  
\end{align*}  
which represents the set of optimal measures minimizing the expected Wasserstein distance from $m_N$. The question then reduces to whether $m \in \mathcal{M}_N$.

This definition can be compared to the results in \cite{dereich2013constructive}, where the authors analyze $\mathbb{E}[\mathsf{W}_1(m_N, m)]$ in relation to $\inf_{\nu}\mathsf{W}_1(\nu, m)$, where the infimum is taken over the set of probability measures supported on $N$ points. In contrast, in our definition of $\mathcal{M}_N$, rather than approximating $m$, we aim to approximate $m_N$ while optimizing over all possible measures. Since $m_N$ is random, the expectation in the definition of $\mathcal{M}_N$ is essential.

The significance of $\mathcal{M}_N$ can be illustrated through the following scenario. Suppose that $N$ individuals will appear in the space $X$ at random locations, with each location being independently drawn from the distribution $m$. Before their arrival, we aim to distribute food across $X$ in such a way that each individual receives an equal share while minimizing the distance they must travel to collect it. The optimal strategy is likely to select a measure $\nu$ from $\mathcal{M}_N$ and distribute the food according to $\nu$.

We provide a sufficient condition for $m \in \mathcal{M}_N$, which holds in many interesting cases, including the Boolean cube.

\begin{theorem}
    Consider a compact topological group $G$ equipped with a metric $d$ that induces its topology. Suppose that $d$ is invariant under the group action, meaning that for any elements $h_1, h_2, g \in G$, we have  
    \begin{align*}  
        d(h_1, h_2) = d(gh_1, gh_2).  
    \end{align*}  
    
    Now, assume that $m$ is a group-invariant measure (Haar measure) on $G$. That is, for any subset $H \subseteq G$ and any $g \in G$,  
    \begin{align*}  
        m(H) = m(gH).  
    \end{align*}  
    
    Then, for all $N \geq 1$, 
    \begin{equation}
        m \in\mathcal{M}_N = \argmin_{\nu \in \mathcal{V}} \mathbb{E}[\mathsf{W}_1(m_N, \nu)],
    \end{equation}  
    meaning that $m$ minimizes the expected Wasserstein distance to the empirical measure $m_N$.

\end{theorem}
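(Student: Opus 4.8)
The plan is to exploit the group structure through an averaging argument: since $m$ is the unique translation-invariant probability measure on $G$, it can be realized as an average of left-translates of an arbitrary competitor $\upsilon$, and convexity of the Wasserstein distance then does the rest.

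First I would record two elementary invariance facts. For $g \in G$ and a probability measure $\upsilon$ on $G$, let $g_*\upsilon$ denote the pushforward of $\upsilon$ under the map $h \mapsto gh$. Since $d$ is left-invariant, this map is an isometry, so pushing any coupling forward in definition (6) shows $W(g_*\upsilon_1, g_*\upsilon_2) = W(\upsilon_1, \upsilon_2)$ for all $\upsilon_1,\upsilon_2$. Since $m$ is Haar, $g_* m = m$, so if $X_1,\dots,X_N$ are i.i.d.\ $\sim m$ then $gX_1,\dots,gX_N$ are again i.i.d.\ $\sim m$; equivalently $g_* m_N \stackrel{d}{=} m_N$. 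Combining these (and replacing $g$ by $g^{-1}$), for every fixed $g$,
\[
    \mathbb{E}\bigl[W(m_N, g_*\upsilon)\bigr] = \mathbb{E}\bigl[W\bigl((g^{-1})_* m_N, \upsilon\bigr)\bigr] = \mathbb{E}\bigl[W(m_N, \upsilon)\bigr].
\]

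Next I would form the averaged measure $\bar\upsilon$ defined by $\bar\upsilon(A) = \int_G (g_*\upsilon)(A)\, dm(g) = \int_G \upsilon(g^{-1}A)\, dm(g)$. A one-line computation using $m(g_0 H) = m(H)$ shows $(g_0)_*\bar\upsilon = \bar\upsilon$ for every $g_0 \in G$, so $\bar\upsilon$ is a translation-invariant Borel probability measure on $G$; by uniqueness of normalized Haar measure, $\bar\upsilon = m$. Now I would use convexity of the map $\upsilon \mapsto W(\nu,\upsilon)$, which follows by gluing couplings: if $\gamma_g$ is an (almost) optimal coupling of $m_N$ and $g_*\upsilon$, then $\int_G \gamma_g\, dm(g)$ is a coupling of $m_N$ and $\bar\upsilon = m$ whose cost is $\int_G W(m_N, g_*\upsilon)\, dm(g)$. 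Hence, for each realization of $m_N$,
\[
    W(m_N, m) = W(m_N, \bar\upsilon) \leq \int_G W(m_N, g_*\upsilon)\, dm(g).
\]
Taking expectations and inserting the identity from the previous step gives $\mathbb{E}[W(m_N, m)] \leq \int_G \mathbb{E}[W(m_N, \upsilon)]\, dm(g) = \mathbb{E}[W(m_N, \upsilon)]$ because $m(G) = 1$. Since $\upsilon \in \mathcal{V}$ was arbitrary, $m \in \mathcal{M}_N$.

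The conceptual outline is short; the work is in the measure-theoretic bookkeeping, which is the main obstacle for general compact $G$. One has to check that $g \mapsto g_*\upsilon$ and $g \mapsto W(m_N, g_*\upsilon)$ are measurable — in fact they are continuous, since compactness of $G$ together with uniform continuity of $d$ makes the translates depend continuously (in the weak topology) on $g$, and on a compact space the Wasserstein distance is continuous under weak convergence — that $\bar\upsilon$ is a genuine Borel probability measure, and that the near-optimal couplings $\gamma_g$ can be chosen measurably in $g$ (alternatively one replaces each $\gamma_g$ by a finite convex combination coming from an $\varepsilon$-net of $G$ and lets $\varepsilon \to 0$). For a finite group such as $\mathbb{B} = \{-1,1\}^n$ every integral over $G$ is a finite sum and all of these points are immediate; the general compact formulation is retained only for the applications discussed in this subsection.
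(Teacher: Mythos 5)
Your proof is correct, and it follows the same high-level strategy as the paper: average the competitor $\upsilon$ over group translates to recover $m$, use invariance of the expected Wasserstein distance under simultaneous translation of both arguments, and then apply convexity of the Wasserstein functional. The difference is in how you identify the averaged measure $\bar\upsilon$ with $m$. The paper first proves that $m$ has full support (via compactness and translation-invariance), then approximates an arbitrary $\upsilon$ in Wasserstein distance by measures absolutely continuous with respect to $m$, reducing to that case, and finally verifies $\int_G T_g\upsilon\,dm(g)=m$ by writing $\upsilon$ through its Radon--Nikodym density and applying Fubini. You instead observe directly that $\bar\upsilon$ is left-translation-invariant and invoke uniqueness of the normalized Haar measure on a compact group, which collapses the full-support and absolute-continuity detour into a single standard citation. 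Your route is shorter and cleaner; what the paper's approach buys is a more elementary, hands-on verification that doesn't lean on the uniqueness theorem (instead re-deriving the relevant consequence via Fubini). You also flag the measure-theoretic bookkeeping — measurability of $g\mapsto W(m_N,g_*\upsilon)$, measurable selection of near-optimal couplings, and that $\bar\upsilon$ is a genuine Borel probability measure — which the paper treats as implicit in the appeal to convexity of optimal cost; this is a real gap in rigor for general compact $G$ that your $\varepsilon$-net fallback honestly acknowledges.
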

\begin{proof}
    It suffices to prove that for any $\nu \in \mathcal{V}$, we have the inequality $\mathbb{E}[\mathsf{W}_1(m_N, m)] \leq \mathbb{E}[\mathsf{W}_1(m_N, \nu)]$. To this end, we first observe that the measure $m$ has full support: for any open set $O$, it holds that $m(O) > 0$. This follows from the fact that the topology on $G$ is generated by open balls of the form $B(g,r) = \{h \in G \mid d(g,h) < r\}$. Suppose there exists an open ball $O = B(g,r)$ such that $m(O) = 0$. In that case, we can find a finite union of balls $\cup_{i=1}^n B(g_i,r)$ that covers the entire group $G$, as $G$ is compact. Specifically, since $G = \bigcup_{i=1}^n B(g_i,r)$, we have
    \begin{align*}
        m(G) \leq m\left(\bigcup_{i=1}^n B(g_i,r)\right) \leq \sum_{i=1}^{n} m(B(g_i,r)) = n \cdot m(B(g,r)) = 0,
    \end{align*}
    which leads to a contradiction. The third equality follows from the invariance of $m$ under translations, i.e., $m(B(g_i,r)) = m(gg_i^{-1}B(g_i,r)) = m(B(g,r))$. Thus, we conclude that $m$ has full support.
    
    For any distribution $\nu$, there exists a sequence of measures $\nu_1, \nu_2, \dots$ such that each $\nu_i$ is absolutely continuous with respect to $m$, and we have $\lim_n \mathsf{W}_1(\nu, \nu_n) = 0$. To construct this sequence explicitly, for any $H \subseteq G$, define
    $\nu_n(H) = \int_H f_n(g) \, dm(g),$
    where $f_n$ is the Radon–Nikodym derivative, given by
    \begin{align*}
        f_n(g) = \frac{\nu(B(g, \frac{1}{n}))}{m(B(g, \frac{1}{n}))} < \infty,
    \end{align*}
    since $m$ has full support. Here, we can interpret $\nu_n$ as the distribution of a random variable $y_n$, where $y_n$ is generated by first sampling a point $x \sim \nu$, and then sampling $y_n$ from the conditional distribution of $m$ in the ball $B(x, \frac{1}{n})$. By coupling $y_n$ and $x$, it follows that
    $\mathsf{W}_1(\nu, \nu_n) \leq \frac{1}{n},$
    and hence, $\lim_n \mathsf{W}_1(\nu, \nu_n) = 0$.
    
    Therefore, by the triangle inequality for the Wasserstein metric (8), it suffices to prove that for any $\nu \in \mathcal{V}$ that is absolutely continuous with respect to $m$, we have $\mathbb{E}[\mathsf{W}_1(m_N, m)] \leq \mathbb{E}[\mathsf{W}_1(m_N, \nu)]$. Specifically, in this case for any $\nu \in \mathcal{V}$ and $\epsilon > 0$, there exists a measure $\nu_\epsilon$ that is absolutely continuous with respect to $m$ and satisfies $\mathsf{W}_1(\nu, \nu_\epsilon) < \epsilon$. Then, we have
    \begin{align*}
        \mathbb{E}[\mathsf{W}_1(m_N, m)] \leq \mathbb{E}[\mathsf{W}_1(m_N, \nu_\epsilon)] \leq \mathbb{E}[\mathsf{W}_1(m_N, \nu)] + \mathbb{E}[\mathsf{W}_1(\nu_\epsilon, \nu)] = \mathbb{E}[\mathsf{W}_1(m_N, \nu)] + \epsilon.
    \end{align*}
    
    By the Radon–Nikodym theorem, there exists a function $f: G \to \mathbb{R}_+$ such that for each $H \subseteq G$, we can express
    \begin{align*}
        \nu(H) = \int_H f(g) \, dm(g).
    \end{align*}
    Next, define an operator $T_g: \mathcal{V} \to \mathcal{V}$ for each $g \in G$, such that for any subset $H \subseteq G$,
    \begin{align*}
        T_g \nu(H) = \nu(gH).
    \end{align*}
    We now claim that for any $g \in G$,
    \begin{align*}
        \mathbb{E}[\mathsf{W}_1(m_N, T_g \nu)] = \mathbb{E}[\mathsf{W}_1(T_g m_N, T_g \nu)] = \mathbb{E}[\mathsf{W}_1(m_N, \nu)].
    \end{align*}
    The first equality follows from the group invariance of $m$, which ensures that $m_N$ and $T_g m_N$ have the same distribution on $\mathcal{V}$. The second equality follows from the group invariance of the distance function $d$. Specifically, if $\gamma(h, h')$ represents the optimal coupling for $\mathsf{W}_1(m_N, \nu)$, then we can define the coupling $\gamma'(h, h') = \gamma(gh, gh')$ for $\mathsf{W}_1(T_g m_N, T_g \nu)$, which achieves the same matching distance. This argument holds in both directions, completing the proof of the equality.

    Note that for any $H \subseteq G$,
    \begin{align*}
        \int_G T_g \nu(H) \, dm(g) &= \int_G \nu(gH) \, dm(g) \\
        &= \int_G \int_H f(gh) \, dm(h) \, dm(g) \\
        &= \int_H \int_G f(gh) \, dm(g) \, dm(h) \\
        &= \int_H \int_G f(g) \, dm(g) \, dm(h) \\
        &= \int_H 1 \, dm(h) \\
        &= m(H)
    \end{align*}
    We can exchange the integrals because all quantities are positive. Therefore, we can write $m = \int_G T_g \nu \, dm(g)$. Finally, by the convexity of optimal cost (\cite{villani2009optimal}, Theorem 4.8), we have
    \begin{align*}
        \mathbb{E}[\mathsf{W}_1(m_N, m)] &= \mathbb{E}\left[W\left(m_N, \int_G T_g \nu \, dg\right)\right] \\
        &\leq \mathbb{E}\left[\int_G \mathsf{W}_1(m_N, T_g \nu) \, dg\right] \\
        &= \mathbb{E}\left[\int_G \mathsf{W}_1(m_N, \nu) \, dg\right] \\
        &= \mathbb{E}\left[\mathsf{W}_1(m_N, \nu)\right]
    \end{align*} 
\end{proof}

Now, let's apply this theorem to some interesting spaces.

\begin{corollary}
    (a) Consider the space $X = \mathbb{B}^n$, the metric $d$ as the Hamming distance, and the measure $m$ as the uniform measure.

    (b) Consider the space $X = \mathbb{T}^n \cong [0,1]^n/\{(x,0) = (x,1), (0,y) = (1,y)\}$, the metric $d_p(x,y) = ||x - y||_p$ for some $p \geq 1$, and the measure $m$ as the uniform measure.

    (c) Consider the space $X = S^{n-1}$ as the unit sphere, the metric $d(x,y) = \arccos(\langle x, y \rangle)$ as the surface distance, and the measure $m$ as the uniform measure.

    In all three cases above,
    \begin{align}
        m \in \mathcal{M}_N
    \end{align}
    for all $N \geq 1$.
\end{corollary}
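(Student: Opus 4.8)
The plan is to check that each of the three cases fits the framework of Theorem 6.1, either directly or after a mild generalization, and then invoke it. The common ingredient in all three is a compact group acting on $X$ transitively by maps that preserve both the metric $d$ and the measure $m$; inspecting the proof of Theorem 6.1, this is really all that is used. The steps establishing that $m$ has full support, the approximation of an arbitrary $\upsilon$ by measures absolutely continuous with respect to $m$, the averaging identity $m = \int_G T_g \upsilon\,dm(g)$, and the closing appeal to convexity of the optimal transport cost never require $X$ itself to carry a group structure — only that some compact $G$ acts transitively, that each $T_g$ is an isometric and $m$-preserving operator on measures, and that the orbit map $g \mapsto g x_0$ pushes the Haar measure of $G$ onto $m$.

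For (a), take $G = X = \mathbb{B}^n$ with coordinatewise multiplication, so that $G \cong (\mathbb{Z}/2\mathbb{Z})^n$ is finite and hence compact. Multiplication by $g$ flips the coordinates indexed by $\{i : g_i = -1\}$, which changes neither the number of disagreeing coordinates of a pair nor the cardinality of a set; thus the Hamming metric is invariant and the uniform (normalized counting) measure is the Haar measure, so Theorem 6.1 applies verbatim. For (b), take $G = X = \mathbb{T}^n = (\mathbb{R}/\mathbb{Z})^n$ with addition, a compact abelian group; the quotient metric $d_p(x,y) = \min_{k \in \mathbb{Z}^n} \|x - y - k\|_p$ is translation invariant and normalized Lebesgue measure is the Haar measure, so Theorem 6.1 again applies directly.

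For (c), the subtlety is that $S^{n-1}$ is not a group for $n \geq 4$, so we instead let $G = SO(n)$ (compact) act on $X = S^{n-1}$ in the usual way. This action is transitive, every rotation is an isometry of the geodesic metric $d(x,y) = \arccos\langle x,y\rangle$, and the uniform measure on the sphere is $SO(n)$-invariant; moreover $S^{n-1} \cong SO(n)/SO(n-1)$, so the orbit map pushes Haar measure on $SO(n)$ to the uniform measure $m$ on the sphere. Defining $T_g \upsilon(H) = \upsilon(gH)$ for $g \in SO(n)$, one reruns the proof of Theorem 6.1 line by line: $m$ has full support because any ball can be rotated onto any other; every $\upsilon$ is approximated in $W$ by measures absolutely continuous with respect to $m$ via the same Radon–Nikodym construction; $\mathbb{E}[W(m_N, T_g\upsilon)] = \mathbb{E}[W(m_N,\upsilon)]$ by invariance of $m$ and $d$; Fubini together with the orbit-map identity gives $m = \int_{SO(n)} T_g \upsilon\,dm(g)$; and convexity of the optimal cost finishes the argument exactly as before.

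The main obstacle is therefore purely expository rather than mathematical: Theorem 6.1 is phrased for $X$ equal to the group, whereas the sphere admits only a transitive action by the external group $SO(n)$, and in that setting one must be slightly careful that "Haar measure on $G$" and "the invariant measure $m$ on $X$" are distinct objects linked by the orbit map (this is exactly what makes the computation $\int_G f(gh)\,dm(g) = 1$ go through). The clean fix is to record — as a remark, or by restating Theorem 6.1 for homogeneous spaces — that the proof uses nothing beyond a transitive, distance- and measure-preserving group action plus the push-forward property of the orbit map; once that is in place, parts (a), (b), and (c) are all immediate.
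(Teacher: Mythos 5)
Your proposal follows the same basic route as the paper: identify the group acting on $X$ and invoke Theorem 6.1. The paper's own proof of the corollary is extremely terse, consisting of a single sentence per case naming the group, so the detail you supply for (a) and (b) is consistent with it. More importantly, you correctly flag a genuine imprecision in the paper's treatment of (c): the paper writes ``consider $S^{n-1}$ as the Lie group $SO(n)/SO(n-1)$,'' but this coset space is not a group, since $SO(n-1)$ is not normal in $SO(n)$ for $n\geq 3$; $S^{n-1}$ carries a Lie group structure only for $n-1\in\{0,1,3\}$. Theorem 6.1, as stated, assumes $X$ itself is a compact topological group with bi-invariant metric and Haar measure, so it does not apply verbatim to the sphere. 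Your diagnosis --- that the proof of Theorem 6.1 only ever uses a transitive action of a compact group $G$ on $X$ by maps preserving both $d$ and $m$, together with the fact that the orbit map pushes Haar measure on $G$ forward to $m$ --- is exactly right, and your sketch of how the Radon--Nikodym approximation, the averaging identity $m=\int_G T_g\upsilon\,dm(g)$, and the convexity step carry over to the homogeneous-space setting is sound. In short, you reproduce the paper's argument for (a) and (b) and supply the missing rigor for (c), which the paper silently elides.
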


\begin{proof}
    \textbf{(a)} Consider $\mathbb{B}^n$ as a group with the product defined by $x \cdot y = (x_1 y_1, \dots, x_n y_n)$, where $x = (x_1, \dots, x_n)$ and $y = (y_1, \dots, y_n)$ are elements of $\mathbb{B}^n$.

    \textbf{(b)} Consider $\mathbb{T}^n$ as the Lie group $\bigoplus_{i=1}^n \mathbb{R}/\mathbb{Z}$, the direct sum of $n$ copies of the circle group, where each $\mathbb{R}/\mathbb{Z}$ represents the unit circle in the real line, often identified with the quotient of the real numbers modulo 1.
    
    \textbf{(c)} Consider $S^{n-1}$ as the Lie group $SO(n)/SO(n-1)$, where $SO(n)$ denotes the special orthogonal group of degree $n$, representing rotations in $n$-dimensional Euclidean space, and $SO(n-1)$ represents rotations in the $(n-1)$-dimensional subspace orthogonal to a chosen direction.
\end{proof}

\subsection{Monotonicity of the expected matching distance}

In the introduction, we posed questions such as: How many samples, $N$, are required to ensure that $\mathbb{E}[\mathsf{W}_1(\mu_N, \mu)] \leq 0.5$? However, could it be possible that $\mathbb{E}[\mathsf{W}_1(\mu_N, \mu)] \leq \mathbb{E}[\mathsf{W}_1(\mu_{N+1}, \mu)]$? If this were the case, the question would lose its significance.  

In this section, we resolve this concern by proving that the expected matching distance is a decreasing function of the sample size $N$.

\begin{theorem}
    Consider any metric space $(X, d)$ and two (possibly different) probability measures $\nu$ and $\nu'$. Let $\nu_N$ be the empirical measure for $\nu$. Then, for any $N \geq 1$, there exists a probability space such that
    
    \begin{align}
        \mathbb{E}[\mathsf{W}_1(\nu_N, \nu') \mid \mathsf{W}_1(\nu_{N+1}, \nu')] \geq \mathsf{W}_1(\nu_{N+1}, \nu').
    \end{align}
    
    Therefore, we have the inequality
    
    \begin{align}
        \mathbb{E}[\mathsf{W}_1(\nu_{N+1}, \nu')] \leq \mathbb{E}[\mathsf{W}_1(\nu_{N}, \nu')].
    \end{align}

\end{theorem}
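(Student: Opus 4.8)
The plan is to realize $\upsilon_N$ and $\upsilon_{N+1}$ on one probability space via the \emph{leave-one-out} construction and then invoke the convexity of the optimal transport cost in its first argument. Concretely, I would sample $X_1,\dots,X_{N+1}$ i.i.d.\ from $\upsilon$ and, independently, an index $J$ uniform on $\{1,\dots,N+1\}$. Put $\upsilon_{N+1}=\frac{1}{N+1}\sum_{i=1}^{N+1}\delta_{X_i}$, and for each $j$ put $\upsilon_N^{(j)}=\frac1N\sum_{i\neq j}\delta_{X_i}$, and finally set $\upsilon_N:=\upsilon_N^{(J)}$. By exchangeability of i.i.d.\ samples, discarding a uniformly random one of the $N+1$ points leaves $N$ i.i.d.\ samples from $\upsilon$, so $\upsilon_N^{(J)}$ indeed has the law of the empirical measure of $N$ draws from $\upsilon$; this is therefore a legitimate coupling carrying both required marginals.

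The one identity to record is that each $X_i$ is retained in $\upsilon_N^{(j)}$ for exactly the $N$ indices $j\neq i$, so that
\begin{align*}
    \frac{1}{N+1}\sum_{j=1}^{N+1}\upsilon_N^{(j)} = \frac{1}{N(N+1)}\sum_{j=1}^{N+1}\sum_{i\neq j}\delta_{X_i} = \frac{1}{N(N+1)}\cdot N\sum_{i=1}^{N+1}\delta_{X_i} = \upsilon_{N+1}.
\end{align*}
Conditionally on the unordered sample $\{X_1,\dots,X_{N+1}\}$, the measure $\upsilon_{N+1}$ is determined while $\upsilon_N$ is uniformly distributed over $\upsilon_N^{(1)},\dots,\upsilon_N^{(N+1)}$. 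Applying the convexity of $W(\cdot,\upsilon')$ (the barycenter/Jensen inequality for optimal transport, \cite{villani2009optimal}, Theorem 4.8 — the same fact already used in the proof of Theorem 6.1),
\begin{align*}
    W(\upsilon_{N+1},\upsilon') = W\!\left(\frac{1}{N+1}\sum_{j=1}^{N+1}\upsilon_N^{(j)},\,\upsilon'\right) \leq \frac{1}{N+1}\sum_{j=1}^{N+1} W(\upsilon_N^{(j)},\upsilon') = \mathbb{E}\big[W(\upsilon_N,\upsilon')\,\big|\,\{X_1,\dots,X_{N+1}\}\big].
\end{align*}
Since $W(\upsilon_{N+1},\upsilon')$ is a function of $\{X_1,\dots,X_{N+1}\}$, a further conditioning on $W(\upsilon_{N+1},\upsilon')$ together with the tower property gives (36); taking a full expectation in the displayed inequality gives $\mathbb{E}[W(\upsilon_{N+1},\upsilon')]\le\mathbb{E}[W(\upsilon_N,\upsilon')]$, which is (37).

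I do not expect a serious obstacle here. The only points that need a little care are (i) the exchangeability check that the uniformly-discarded-point measure genuinely has the $N$-sample empirical law (a one-line argument), and (ii) quoting the convexity inequality for $W(\cdot,\upsilon')$ in the correct barycentric form — valid on any Polish space and already invoked earlier in the paper; note it is used in the "easy" direction (upper bound on $W$ of a mixture), so no extra hypotheses on $(X,d)$ beyond those already in force are needed. Everything else is the bookkeeping identity above.
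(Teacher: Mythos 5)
Your proof is correct, and it takes a genuinely different route from the paper's. The paper works with the Kantorovich dual: it fixes an $\epsilon$-near-optimal $1$-Lipschitz function $f_\epsilon$ for the pair $(\upsilon_{N+1},\upsilon')$, generates $\upsilon_N$ by keeping $X_1,\dots,X_N$, and uses conditional exchangeability of $X_1,\dots,X_{N+1}$ given $W(\upsilon_{N+1},\upsilon')$ to show $\mathbb{E}[\int f_\epsilon\,d\upsilon_N \mid \cdot] = \int f_\epsilon\,d\upsilon_{N+1}$, then bounds from below by the supremum over Lipschitz test functions and lets $\epsilon\to 0$. You instead work on the primal side: you make the coupling explicit (drop a uniformly random one of the $N+1$ points), verify the barycenter identity $\frac{1}{N+1}\sum_{j}\upsilon_N^{(j)} = \upsilon_{N+1}$, and then invoke convexity of the optimal transport cost $W(\cdot,\upsilon')$ (Villani, Thm.~4.8 — which the paper itself already quotes in the proof of Theorem~6.1). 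What your route buys: it avoids the $\epsilon$-approximation and the slight bookkeeping subtlety in the paper (where the RHS $\int f_\epsilon\,d\upsilon_{N+1}$ of the displayed identity is not literally $\sigma(W(\upsilon_{N+1},\upsilon'))$-measurable, so the equality should really be read under an extra conditional expectation), and it makes the ``there exists a probability space'' clause of the statement concrete. What the paper's route buys: it is self-contained at the level of linearity of expectation plus the dual definition, and does not need to quote convexity of $W$. Both are valid; the conclusions and hypotheses match.
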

\begin{proof}
    Given $\mathsf{W}_1(\nu_{N+1}, \nu')$, suppose $\nu_{N+1}$ is generated by random points $X_1, \dots, X_{N+1} \in X$. Note that the conditional distribution of $X_1, \dots, X_{N+1}$ might not be independent, but they are still exchangeable. Recall the second definition of Wasserstein distance in (7). For any $\epsilon > 0$, suppose $f_\epsilon$ is 1-Lipschitz and

    \begin{align*}
        \int_X f_\epsilon \, d\nu_{N+1} - \int_X f_\epsilon \, d\nu' \geq \mathsf{W}_1(\nu_{N+1}, \nu') - \epsilon.
    \end{align*}

    Note that the existence of $f_\epsilon$ is guaranteed by (7), but it is not fixed and depends on $\nu_{N+1}$. We then generate $\nu_N$ by keeping $\{X_1, \dots, X_{N}\}$. By the conditional exchangeability of $X_1, \dots, X_{N+1}$, we have
    
    \begin{align*}
        \mathbb{E}\left[\int_X f_\epsilon d\nu_{N} \;\Bigg{|}\; \mathsf{W}_1(\nu_{N+1},\nu')\right] = \int_X f_\epsilon d\nu_{N+1}.
    \end{align*}
    
    Therefore,
    
    \begin{align*}
        \mathbb{E}[\mathsf{W}_1(\nu_N,\nu')\;|\;\mathsf{W}_1(\nu_{N+1},\nu')] 
        &= \mathbb{E}\left[\sup_{f:1-Lipschitz} \int_X f d\nu_{N} - \int_Xf d\nu' \;\Bigg{|}\; \mathsf{W}_1(\nu_{N+1},\nu')\right]\\
        &\geq \mathbb{E}\left[\int_X f_\epsilon d\nu_{N} - \int_X f_\epsilon d\nu' \;\Bigg{|}\; \mathsf{W}_1(\nu_{N+1},\nu')\right]\\
        &= \mathbb{E}\left[\int_X f_\epsilon d\nu_{N+1} - \int_X f_\epsilon d\nu' \;\Bigg{|}\; \mathsf{W}_1(\nu_{N+1},\nu')\right]\\
        &\geq \mathsf{W}_1(\nu_{N+1},\nu') - \epsilon.
    \end{align*}
    
    Since $\epsilon$ is arbitrary, we have proved (35). Then the tower property implies (36).
\end{proof}

\subsection{Variance and concentration inequalities}
\begin{theorem}
Consider any metric space $(X, d)$ and a probability measure $\nu$ on it such that for two independent random variables $X_1, X_2 \sim \nu$, we have $\mathbb{E}[d(X_1, X_2)^2] < \infty$. Then, for any (possibly different) probability measure $\nu'$ such that $\mathbb{E}[\mathsf{W}_1(\nu_1, \nu')] < \infty$, we have

\begin{align}
    \mathsf{Var}(\mathsf{W}_1(\nu_N,\nu')) \leq \frac{\mathbb{E}[d(X_1,X_2)^2]}{2N}.
\end{align}

\end{theorem}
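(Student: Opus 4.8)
The plan is to use the Efron--Stein inequality, which is the standard tool for bounding the variance of a function of independent random variables. Write $W(\upsilon_N, \upsilon') = F(X_1, \dots, X_N)$, where $X_1, \dots, X_N \sim \upsilon$ are i.i.d. The Efron--Stein inequality states that
\begin{align*}
    \mathrm{Var}(F(X_1,\dots,X_N)) \leq \frac{1}{2}\sum_{i=1}^N \mathbb{E}\left[(F(X_1,\dots,X_N) - F(X_1,\dots,X_i',\dots,X_N))^2\right],
\end{align*}
where $X_i'$ is an independent copy of $X_i$. So the task reduces to controlling, for each $i$, the effect of resampling a single point $X_i \mapsto X_i'$ on the Wasserstein distance.

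The key step is a pointwise (i.e. sample-path) bound: if $\upsilon_N$ and $\tilde\upsilon_N$ are empirical measures built from point sets differing only in the $i$-th coordinate ($X_i$ versus $X_i'$), then by the triangle inequality for $W$,
\begin{align*}
    |W(\upsilon_N,\upsilon') - W(\tilde\upsilon_N,\upsilon')| \leq W(\upsilon_N, \tilde\upsilon_N) \leq \frac{1}{N} d(X_i, X_i').
\end{align*}
The last inequality comes from the coupling that leaves all $N-1$ shared points fixed and transports the mass $1/N$ sitting at $X_i$ to $X_i'$, which is an admissible (though generally not optimal) coupling between $\upsilon_N$ and $\tilde\upsilon_N$ and costs exactly $\frac{1}{N}d(X_i,X_i')$. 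Squaring and taking expectations gives $\mathbb{E}[(F - F^{(i)})^2] \leq \frac{1}{N^2}\mathbb{E}[d(X_i,X_i')^2] = \frac{1}{N^2}\mathbb{E}[d(X_1,X_2)^2]$, where the equality uses that $X_i$ and $X_i'$ are i.i.d. copies of $\upsilon$, matching the distribution of the pair $(X_1,X_2)$.

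Summing over $i = 1, \dots, N$ and inserting the factor $\frac{1}{2}$ from Efron--Stein yields
\begin{align*}
    \mathrm{Var}(W(\upsilon_N,\upsilon')) \leq \frac{1}{2} \cdot N \cdot \frac{\mathbb{E}[d(X_1,X_2)^2]}{N^2} = \frac{\mathbb{E}[d(X_1,X_2)^2]}{2N},
\end{align*}
which is exactly (39). The finiteness hypotheses ($\mathbb{E}[d(X_1,X_2)^2] < \infty$ and $\mathbb{E}[W(\upsilon_N,\upsilon')] < \infty$) are what make all these expectations well-defined and justify applying Efron--Stein. I do not expect a serious obstacle here; the only point requiring a little care is verifying that the "move one atom" coupling is genuinely a coupling of $\upsilon_N$ and $\tilde\upsilon_N$ (one must check the marginals coordinate by coordinate, handling the possibility that some of the other $X_j$ coincide with $X_i$ or $X_i'$), but this is routine and does not affect the cost bound.
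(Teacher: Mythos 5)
Your proof is correct and follows essentially the same route as the paper: apply the Efron--Stein inequality and control each one-coordinate perturbation via the triangle inequality together with the "move one atom" coupling bound $W(\upsilon_N,\tilde\upsilon_N)\leq \frac{1}{N}d(X_i,X_i')$. The only cosmetic difference is that the paper phrases the Efron--Stein differences along the telescoping path $X(i-1)\to X(i)$ rather than resampling a single coordinate of the original vector; the two formulations are equivalent and yield the same bound.
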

\begin{proof}
    By the monotonicity in Theorem 6.3, we have $\mathbb{E}[\mathsf{W}_1(\nu_N, \nu')] < \infty$ for all $N \geq 1$. Suppose $\nu_N$ is generated by $X_1, \dots, X_N$, and let $X_1', \dots, X_N'$ be i.i.d. copies of it. Denote $X(i) = (X_1', \dots, X_i', X_{i+1}, \dots, X_N)$. Let $\nu_N(i)$ be the probability measure generated by $X(i)$. Then, we can write $\mathsf{W}_1(\nu_N, \nu') = \mathsf{W}_1(\mu_N(0), \nu')$, and by the Efron–Stein inequality \cite{efron1981jackknife}, we have

    \begin{align*}
        \mathsf{Var}(\mathsf{W}_1(\nu_N,\nu')) \leq \frac{1}{2}\sum_{i=1}^{N}\mathbb{E}[(\mathsf{W}_1(\nu_n(i),\nu') - \mathsf{W}_1(\nu_n(i-1),\nu'))^2].
    \end{align*}
    
    Note that
    
    \begin{align*}
        |\mathsf{W}_1(\nu_N(i),\nu') - \mathsf{W}_1(\nu_N(i-1),\nu')| \leq \mathsf{W}_1(\nu_N(i),\nu_N(i-1)) \leq \frac{d(X_i,X_i')}{N}.
    \end{align*} 
    
    The first inequality follows from the triangle inequality of the Wasserstein metric (8). The second inequality is obtained by directly moving the mass from $X_i$ to $X_i'$. Therefore,
    
    \begin{align*}
        \mathsf{Var}(\mathsf{W}_1(\nu_N,\nu')) \leq \frac{1}{2}\sum_{i=1}^{N}\mathbb{E}\left[\left(\frac{d(X_i,X_i')}{N}\right)^2\right] = \frac{\mathbb{E}[d(X_1,X_2)^2]}{2N}
    \end{align*}
\end{proof}

\begin{theorem}
    Consider a bounded metric space $(X, d)$ with diameter $D = \sup_{x_1, x_2 \in X} d(x_1, x_2)$ and a probability measure $\nu$ on $X$ with the empirical measure $\nu_N$. Then, for any (possibly different) probability measure $\nu'$ and for all $t \geq 0$, the following concentration inequalities hold:  

\begin{align}
    P(\mathsf{W}_1(\nu_N, \nu') - M(\mathsf{W}_1(\nu_N, \nu')) \geq t) \leq 2e^{-\frac{t^2 N}{4D^2}},
\end{align}  
and  
\begin{align}
    P(\mathsf{W}_1(\nu_N, \nu') - M(\mathsf{W}_1(\nu_N, \nu')) \leq -t) \leq 2e^{-\frac{t^2 N}{4D^2}},
\end{align}  
where $M(\mathsf{W}_1(\nu_N, \nu'))$ denotes the median of $\mathsf{W}_1(\nu_N, \nu')$.

\end{theorem}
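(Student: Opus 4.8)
The plan is to view $W(\upsilon_N,\upsilon')$ as a function of the $N$ independent sample points that has bounded differences with small constants, apply McDiarmid's inequality to get sub-Gaussian concentration about the \emph{mean}, and then transfer to the \emph{median}; the constants $4D^2$ and the leading factor $2$ in the statement are exactly what is needed to absorb the mean--median gap. First I would set $F(x_1,\dots,x_N) = W(\nu^x_N,\upsilon')$, where $\nu^x_N = \frac1N\sum_{i=1}^N \mathbb{I}(\,\cdot\,=x_i)$ is the empirical measure of $x_1,\dots,x_N$, and note that if $x$ and $\tilde x$ differ only in coordinate $i$ then, by the triangle inequality (8) and the coupling that moves the mass $\frac1N$ at $x_i$ directly onto $\tilde x_i$ — the same one-line estimate used in the proof of the variance bound above — $|F(x) - F(\tilde x)| \le W(\nu^x_N,\nu^{\tilde x}_N) \le \frac1N d(x_i,\tilde x_i) \le D/N$. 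Thus $F$ has bounded differences with constants $c_i = D/N$ and $\sum_i c_i^2 = D^2/N$.

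Since $X_1,\dots,X_N$ are i.i.d., McDiarmid's inequality then gives, for all $t\ge 0$, both $\mathbb{P}(W(\upsilon_N,\upsilon') - \mathbb{E}[W(\upsilon_N,\upsilon')] \ge t) \le e^{-t^2 N/(2D^2)}$ and the corresponding lower-tail bound. To pass to the median $M := M(W(\upsilon_N,\upsilon'))$, write $\mu := \mathbb{E}[W(\upsilon_N,\upsilon')]$ and $\delta := |M-\mu|$; the defining properties $\mathbb{P}(W(\upsilon_N,\upsilon') \ge M)\ge\frac12$ and $\mathbb{P}(W(\upsilon_N,\upsilon')\le M)\ge\frac12$, combined with whichever tail bound has the right sign at $t=\delta$, force $\frac12 \le e^{-\delta^2 N/(2D^2)}$, i.e. $\delta^2 \le 2D^2\log 2/N$. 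Finally, for $t\ge\delta$ the inclusion $\{W(\upsilon_N,\upsilon')\ge M+t\}\subseteq\{W(\upsilon_N,\upsilon')-\mu\ge t-\delta\}$ (valid since $M\ge\mu-\delta$) together with the upper-tail bound gives $\mathbb{P}(W(\upsilon_N,\upsilon')\ge M+t)\le e^{-(t-\delta)^2 N/(2D^2)}$; a short calculation — maximizing $\frac{t^2}{4}-\frac{(t-\delta)^2}{2}$ over $t$, whose maximum value $\frac{\delta^2}{2}$ is at most $\frac{D^2\log 2}{N}$ — shows this is $\le 2e^{-t^2 N/(4D^2)}$, while for $0\le t<\delta$ the claimed bound already exceeds $1$ and is trivial. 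The lower-tail inequality follows in the same way from $M\le\mu+\delta$ and the lower-tail McDiarmid bound.

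I do not expect any genuine obstacle: the bounded-differences estimate is essentially already present in the proof of the variance bound, and McDiarmid's inequality is standard. The only place that needs care is the constant-chasing when converting from mean- to median-centered concentration — a routine piece of algebra, arranged precisely so that the factor $2$ and the $4D^2$ in the statement swallow the error term $\delta$.
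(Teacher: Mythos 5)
Your argument is correct, and it is a genuinely different route from the paper's. The paper applies Talagrand's product-measure concentration inequality directly: it works with the set $A = \{x \in X^N : W(\delta_x, \upsilon') \leq M\}$, uses the functional $g(A,\cdot)$ from Talagrand's inequality, and observes that if $W(\delta_{a'},\upsilon') \geq M + t$ then $a'$ must differ from every point of $A$ in at least $tN/D$ coordinates; the $\mu_N(A) \geq \tfrac12$ factor is what produces the leading $2$. You instead take the bounded-differences estimate $|F(x) - F(\tilde x)| \leq D/N$ (the same Lipschitz-in-one-coordinate bound the paper uses for the variance inequality), feed it to McDiarmid/Azuma--Hoeffding to get concentration about the mean, and then pay for the mean--median gap $\delta$ by a clean optimization of $t^2/4 - (t-\delta)^2/2$, which is exactly absorbed by the $4D^2$ and the factor $2$ in the statement. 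Talagrand's inequality is naturally median-centered, so the paper needs no transfer step; your approach is more elementary (Azuma/McDiarmid rather than Talagrand's convex-distance inequality) but requires the conversion, which is where the constants are consumed. One minor quibble: the standard McDiarmid bound with $\sum c_i^2 = D^2/N$ actually gives the stronger $e^{-2t^2N/D^2}$; the $e^{-t^2N/(2D^2)}$ you quote is the weaker Azuma/Doob-martingale constant, but it suffices for your computation, so no harm done.
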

\begin{proof}
    Since $X$ is bounded, it follows that $M(\mathsf{W}_1(\nu_N, \nu')) \leq D < \infty$ for all $N \geq 1$. The result can be established using Talagrand's inequality \cite{talagrand1995concentration}.  

    Consider the product space $X^N$ and define the subsets  
    $$
    A = \{ x \in X^N \mid \mathsf{W}_1(\delta_x, \nu') \leq M(\mathsf{W}_1(\nu_N, \nu')) \}
    $$  
    and  
    $$
    B = \{ x \in X^N \mid \mathsf{W}_1(\delta_x, \nu') \geq M(\mathsf{W}_1(\nu_N, \nu')) \},
    $$ 
    where $\delta_x = \sum_{i=1}^N \frac{1}{N}\delta_{x_i}$ as a probability measure in $X$. Since $\nu_N$ is a product measure on $X^N$ and $M(\mathsf{W}_1(\nu_N,\nu'))$ is the median, we have  
    $$
    \nu_N(A) \geq \frac{1}{2}, \quad \text{and} \quad \nu_N(B) \geq \frac{1}{2}.
    $$  
    
    For a subset $A \subseteq X^N$ and an element $x \in X^N$, define the function $g(A, x)$ as follows:  
    \begin{align*}
        g(A, x) = \sup_{|\beta| \leq 1} \inf_{y \in A} \sum_{i=1}^N \beta_i I(x_i \neq y_i),
    \end{align*}  
    where $\beta \in \mathbb{R}^N$ is a weight vector satisfying $|\beta| \leq 1$. Since $\nu_N$ is a product measure on $X^N$, Talagrand's inequality implies that for any $u \geq 0$, we have  
\begin{align}
    \mu_N(\{x \in X^N \mid g(A, x) \geq u\}) \mu_N(A) \leq e^{-\frac{u^2}{4}}.
\end{align}  

Now, consider the set  
$$
A' = \{ x \in X^N \mid \mathsf{W}_1(\delta_x, \nu') \geq M(\mathsf{W}_1(\nu_N, \nu')) + t \}.
$$  
We claim that for any $a = (a_1, \dots, a_N) \in A$ and $a' = (a_1', \dots, a_N') \in A'$, the following inequality holds:  
$$
\sum_{i=1}^{N} I(a_i \neq a_i') \geq \frac{tN}{D}.
$$  
This follows directly from the triangle inequality for the Wasserstein metric:  
\begin{align*}
    t &\leq \mathsf{W}_1(\delta_{a'}, \nu') - \mathsf{W}_1(\delta_{a}, \nu') \\
    &\leq \mathsf{W}_1(\delta_{a}, \delta_{a'}) \\
    &\leq \frac{D \sum_{i=1}^{N} I(a_i \neq a_i')}{N}.
\end{align*}  

By choosing $\beta_i = \frac{1}{\sqrt{N}}$ for all $i$, we obtain, for any $a' \in A'$,  
\begin{align*}
    g(A, a') &\geq \inf_{y \in A} \sum_{i=1}^{N} \sqrt{\frac{1}{N}} I(a_i \neq a_i')  \\
    &\geq \frac{t \sqrt{N}}{D}.
\end{align*}  

Thus, we conclude that  
$$
A' \subseteq \{ x \in X^N \mid g(A, x) \geq \frac{t\sqrt{N}}{D} \}.
$$  
Applying inequality (40), we obtain  
\begin{align*}
    P(\mathsf{W}_1(\nu_N, \nu') - M(\mathsf{W}_1(\nu_N, \nu')) \geq t) &= \mu_N(A')\\
    &\leq \mu_N(\{ x \in X^N \mid g(A, x) \geq \frac{t\sqrt{N}}{D} \})\\
    &\leq \frac{e^{-\frac{t^2 N}{4D^2}}}{\mu_N(A)}\\
    &\leq 2e^{-\frac{t^2 N}{4D^2}}.
\end{align*}  

A similar argument applies to (39) by considering the set  
$$
B' = \{ x \in X^N \mid \mathsf{W}_1(\delta_x, \nu') \leq M(\mathsf{W}_1(\nu_N, \nu')) - t \}.
$$  

\end{proof}

\section{Conclusion}
In this paper, we establish bounds on $\mathbb{E}[\mathsf{W}_1(\mu_N,\mu)]$ for $\mathbb{B} = \{-1,1\}^n$, considering different sample sizes $N$ in relation to the dimension $n$. For the upper bound, we employ Fourier analysis and large deviation techniques, while for the lower bound, we utilize total variation distance and volume estimation.  

An important observation is that the empirical process is "almost optimal" for covering the space in high dimensions, as the lower bound obtained via volume estimation closely matches the upper bound. This suggests that in Euclidean space $\mathbb{R}^n$, the expected matching distance is also very close to the lower bound determined by volume constraints in high dimensions. A precise estimation of this behavior remains an open question.

\section*{Acknowledgement}
I'm grateful to my advisor Philippe Sosoe for his support on this paper. I'm also thankful to Kengo Kato and Soumik Pal for suggesting related references.

\bibliographystyle{alpha}
\bibliography{References}

@article{bobkov2021simple,
  title={A simple Fourier analytic proof of the AKT optimal matching theorem},
  author={Bobkov, Sergey G and Ledoux, Michel},
  journal={The Annals of Applied Probability},
  volume={31},
  number={6},
  pages={2567--2584},
  year={2021},
  publisher={Institute of Mathematical Statistics}
}

@article{ajtai1984optimal,
  title={On optimal matchings},
  author={Ajtai, Mikl{\'o}s and Koml{\'o}s, J{\'a}nos and Tusn{\'a}dy, G{\'a}bor},
  journal={Combinatorica},
  volume={4},
  pages={259--264},
  year={1984},
  publisher={Springer-Verlag}
}

@book{talagrand2014upper,
  title={Upper and lower bounds for stochastic processes},
  author={Talagrand, Michel},
  volume={60},
  year={2014},
  publisher={Springer}
}

@article{ambrosio2019pde,
  title={A PDE approach to a 2-dimensional matching problem},
  author={Ambrosio, Luigi and Stra, Federico and Trevisan, Dario},
  journal={Probability Theory and Related Fields},
  volume={173},
  pages={433--477},
  year={2019},
  publisher={Springer}
}

@book{o2014analysis,
  title={Analysis of boolean functions},
  author={O'Donnell, Ryan},
  year={2014},
  publisher={Cambridge University Press}
}

@book{ash2012information,
  title={Information theory},
  author={Ash, Robert B},
  year={2012},
  publisher={Courier Corporation}
}

@book{villani2009optimal,
  title={Optimal transport: old and new},
  author={Villani, C{\'e}dric and others},
  volume={338},
  year={2009},
  publisher={Springer}
}

@article{talagrand1995concentration,
  title={Concentration of measure and isoperimetric inequalities in product spaces},
  author={Talagrand, Michel},
  journal={Publications Math{\'e}matiques de l'Institut des Hautes Etudes Scientifiques},
  volume={81},
  pages={73--205},
  year={1995},
  publisher={Springer}
}

@article{efron1981jackknife,
  title={The jackknife estimate of variance},
  author={Efron, Bradley and Stein, Charles},
  journal={The Annals of Statistics},
  pages={586--596},
  year={1981},
  publisher={JSTOR}
}

@article{fournier2015rate,
  title={On the rate of convergence in Wasserstein distance of the empirical measure},
  author={Fournier, Nicolas and Guillin, Arnaud},
  journal={Probability theory and related fields},
  volume={162},
  number={3},
  pages={707--738},
  year={2015},
  publisher={Springer}
}

@article{del2019central,
  title={Central limit theorems for empirical transportation cost in general dimension},
  author={Del Barrio, Eustasio and Loubes, Jean-Michel},
  year={2019}
}

@article{weed2019sharp,
  title={Sharp asymptotic and finite-sample rates of convergence of empirical measures in Wasserstein distance},
  author={Weed, Jonathan and Bach, Francis},
  journal={Bernoulli},
  volume={25},
  number={4A},
  pages={2620--2648},
  year={2019},
  publisher={JSTOR}
}

@article{dobric1995asymptotics,
  title={Asymptotics for transportation cost in high dimensions},
  author={Dobri{\'c}, Vladimir and Yukich, Joseph E},
  journal={Journal of Theoretical Probability},
  volume={8},
  pages={97--118},
  year={1995},
  publisher={Springer}
}

@inproceedings{dereich2013constructive,
  title={Constructive quantization: Approximation by empirical measures},
  author={Dereich, Steffen and Scheutzow, Michael and Schottstedt, Reik},
  booktitle={Annales de l'IHP Probabilit{\'e}s et statistiques},
  volume={49},
  number={4},
  pages={1183--1203},
  year={2013}
}

@incollection{barthe2013combinatorial,
  title={Combinatorial optimization over two random point sets},
  author={Barthe, Franck and Bordenave, Charles},
  booktitle={S{\'e}minaire de probabilit{\'e}s XLV},
  pages={483--535},
  year={2013},
  publisher={Springer}
}

@article{fournier2023convergence,
  title={Convergence of the empirical measure in expected wasserstein distance: non-asymptotic explicit bounds in \(\mathbb{R}^d\)},
  author={Fournier, Nicolas},
  journal={ESAIM: Probability and Statistics},
  volume={27},
  pages={749--775},
  year={2023},
  publisher={EDP Sciences}
}

@article{talagrand1992matching,
  title={Matching random samples in many dimensions},
  author={Talagrand, Michel},
  journal={The Annals of Applied Probability},
  pages={846--856},
  year={1992},
  publisher={JSTOR}
}
\end{document}